\newtheorem{thm}{Theorem}[section]
\newtheorem*{thm*}{Theorem}
\newtheorem{lem}[thm]{Lemma}
\newtheorem{cor}[thm]{Corollary}
\newtheorem{prop}[thm]{Proposition}
\newtheorem{obs}[thm]{Observation}
\theoremstyle{definition}
\newtheorem{defn}[thm]{Definition}
\newtheorem{q}[thm]{Question}
\newtheorem{ex}[thm]{Example}
\newtheorem{notn}[thm]{Notation}
\newtheorem*{notn*}{Notation}
\newtheorem*{hyp*}{Hypothesis}
\newtheorem{rem}[thm]{Remark}
\newtheorem*{rem*}{Remark}
\newtheorem{rems}[thm]{Remarks}
\numberwithin{equation}{section}
\newcommand{\secref}[1]{Section~\textup{\ref{#1}}}
\newcommand{\thmref}[1]{Theorem~\textup{\ref{#1}}}
\newcommand{\corref}[1]{Corollary~\textup{\ref{#1}}}
\newcommand{\lemref}[1]{Lemma~\textup{\ref{#1}}}
\newcommand{\propref}[1]{Proposition~\textup{\ref{#1}}}
\newcommand{\defnref}[1]{Definition~\textup{\ref{#1}}}
\newcommand{\remref}[1]{Remark~\textup{\ref{#1}}}
\newcommand{\qref}[1]{Question~\textup{\ref{#1}}}
\newcommand{\midtext}[1]{\quad\text{#1}\quad}
\newcommand{\righttext}[1]{\quad\text{#1 }}
\renewcommand{\and}{\midtext{and}}
\newcommand{\C}{\mathbb C}
\newcommand{\F}{\mathbb F}
\newcommand{\KK}{\mathcal K}
\newcommand{\JJ}{\mathcal J}
\newcommand{\LL}{\mathcal L}
\newcommand{\EE}{\mathcal E}
\renewcommand{\AA}{\mathcal A}
\renewcommand{\epsilon}{\varepsilon}
\DeclareMathOperator{\ad}{Ad}
\DeclareMathOperator{\ind}{Ind}
\DeclareMathOperator*{\spn}{span}
\DeclareMathOperator*{\clspn}{\overline{\spn}}
\newcommand{\id}{\text{\textup{id}}}
\newcommand{\<}{\langle}
\renewcommand{\>}{\rangle}
\renewcommand{\iff}{\ensuremath{\Leftrightarrow}}
\newcommand{\inv}{^{-1}}
\newcommand{\normal}{\vartriangleleft}
\renewcommand{\bar}{\overline}
\newcommand{\what}{\widehat}
\newcommand{\wilde}{\widetilde}
\newcommand{\ann}{^\perp}
\newcommand{\pann}{{}\ann}
\newcommand{\cs}%
{\ensuremath{\mathbf{C^*}}}
\newcommand{\csnd}%
{\ensuremath{\cs\!\!_\mathbf{nd}}}
\newcommand{\coact}%
{\ensuremath{\mathbf{C^*coact}}}
\newcommand{\coactnd}%
{\ensuremath{\coact_\mathbf{nd}}}
\newcommand{\coactn}%
{\ensuremath{\coact^\mathbf{n}}}
\newcommand{\coactnnd}%
{\ensuremath{\coactn_\mathbf{nd}}}
\newcommand{\coactm}%
{\ensuremath{\coact^\mathbf{m}}}
\newcommand{\coactmnd}%
{\ensuremath{\coactm_\mathbf{nd}}}
\begin{document}
\title{Exotic coactions}

\author[Kaliszewski]{S. Kaliszewski}
\address{School of Mathematical and Statistical Sciences
\\Arizona State University
\\Tempe, Arizona 85287}
\email{kaliszewski@asu.edu}

\author[Landstad]{Magnus~B. Landstad}
\address{Department of Mathematical Sciences\\
Norwegian University of Science and Technology\\
NO-7491 Trondheim, Norway}
\email{magnusla@math.ntnu.no}

\author[Quigg]{John Quigg}
\address{School of Mathematical and Statistical Sciences
\\Arizona State University
\\Tempe, Arizona 85287}
\email{quigg@asu.edu}

\subjclass[2000]{Primary  46L05}

\keywords{group $C^*$-algebra, coaction, $C^*$-bialgebra, Fourier-Stieltjes algebra}

\date{\today}

\begin{abstract}
If a locally compact group $G$ acts on a $C^*$-algebra B, we have both full and reduced crossed products, and each has a coaction of $G$.
We investigate ``exotic'' coactions in between,
that are determined by certain ideals $E$ of the Fourier-Stieltjes algebra $B(G)$ --- an approach that is inspired by recent work of Brown and Guentner on new $C^*$-group algebra completions.
We actually carry out the bulk of our investigation in the general context of coactions on a $C^*$-algebra $A$.
Buss and Echterhoff have shown that not every coaction comes from one of these ideals, but nevertheless the ideals do generate a wide array of exotic coactions.
Coactions determined by these ideals $E$ satisfy a certain ``$E$-crossed product duality'', intermediate between full and reduced duality.
We give partial results concerning exotic coactions, with the ultimate goal being a classification of which coactions are determined by ideals of $B(G)$.
\end{abstract}
\maketitle

\section{Introduction}
\label{intro}

If $\alpha$ is an action of a nonamenable locally compact group $G$ on a $C^*$-algebra $B$, there are in general numerous crossed product $C^*$-algebras; the largest is the full crossed product $B\rtimes_\alpha G$, and the smallest is the reduced crossed product $B\rtimes_{\alpha,r} G$. But there are frequently many ``exotic'' crossed products in between,
i.e., quotients $(B\rtimes_\alpha G)/I$ where $I$ is an ideal contained in the kernel of the regular representation $\Lambda:B\rtimes_\alpha G\to B\rtimes_{\alpha,r} G$.
A na\"ive question is: how to classify these ``large quotients'' of the crossed product?
This is surely too large a class to seriously contemplate.
We are interested in the large quotients that carry a ``dual coaction'' $\delta$, as indicated in the commutative diagram
\[
\xymatrix@C-20pt{
B\rtimes_\alpha G \ar[rr]^-{\what\alpha} \ar[dd]_\Lambda \ar[dr]^q
&&B\rtimes_\alpha G\otimes C^*(G) \ar'[d]^{\Lambda\otimes\id}[dd] \ar[dr]^{q\otimes\id}
\\
&(B\rtimes_\alpha G)/I \ar[rr]^(.3)\delta \ar[dl]
&&(B\rtimes_\alpha G)/I\otimes C^*(G) \ar[dl]
\\
B\rtimes_{\alpha,r} G \ar[rr]_-{\what\alpha^n}
&&B\rtimes_{\alpha,r} G\otimes C^*(G).
}
\]
We ask, how to classify these \emph{exotic coactions}?

Motivated by a recent paper of Brown and Guentner \cite{BrownGuentner}, we introduce a tool that produces many (but not all --- see below) of these exotica. To clarify matters, consider the special case $B=\C$, so we have a diagram
\[
\xymatrix@C-20pt{
C^*(G) \ar[rr]^-{\delta_G} \ar[dd]_\lambda \ar[dr]^q
&&C^*(G)\otimes C^*(G) \ar'[d]^{\lambda\otimes\id}[dd] \ar[dr]^{q\otimes\id}
\\
&C^*(G)/I \ar[rr]^(.3)\delta \ar[dl]
&&C^*(G)/I\otimes C^*(G) \ar[dl]
\\
C^*_r(G) \ar[rr]_-{\delta_G^n}
&&C^*_r(G)\otimes C^*(G)
}
\]
Then $I\subset \ker\lambda$, and in \cite[Corollary~3.13]{graded} we proved that
a large quotient $C^*(G)/I$ carries a coaction if and only if the annihilator $E=I^\perp$ in the Fourier-Stieltjes algebra $B(G)=C^*(G)^*$ is an ideal,
which will necessarily be \emph{large} in the sense that it
contains the reduced Fourier-Stieltjes algebra $B_r(G)=C^*_r(G)^*$.

Thus, large quotients of $C^*(G)$ carrying coactions are classified by \emph{large} ideals of $B(G)$.
When we began this study we wondered whether these ideals of $B(G)$ could be used
to classify all large quotients of $B\rtimes_{\alpha} G$ carrying dual coactions;
however
Buss and Echterhoff 
have recently found
a counterexample \cite[Example~5.3]{BusEch}.

Nevertheless it appears that there are lots of these ``exotic ideals'':
it has been attributed to Okayasu~\cite{okayasu} and (independently)
to Higson and Ozawa (see \cite[Remark~4.5]{BrownGuentner})
that for $2\le p<\infty$, the ideals $E_p$ of $B(\F_2)$ formed by taking the weak* closures of $B(\F_2)\cap \ell^p(\F_2)$
are all different.

We use these large ideals $E$ of $B(G)$ to generate intermediate crossed products via slicing:
the dual coaction $\what\alpha$ of $G$
gives a module action of $B(G)$ on $B\rtimes_\alpha G$
by
\[
f\cdot a=(\id\otimes f)\circ \what\alpha(a).
\]
It turns out that the kernel of the regular representation $\Lambda:B\rtimes_\alpha G\to B\rtimes_{\alpha,r} G$
comprises the elements that are killed by $B_r(G)$.
Thus the ideal $B_r(G)\normal B(G)$ allows us to recover the reduced crossed product.
For any large quotient $q:B\rtimes_\alpha G\to (B\rtimes_\alpha G)/I$ carrying a dual coaction,
it is natural to ask whether there exists a large ideal $E\normal B(G)$ such that
\[
\ker q=\{a\in B\rtimes_\alpha G:E\cdot a=\{0\}\};
\]
in any event,
\secref{once} below
shows that
for a large ideal $E\normal B(G)$,
and any coaction $\delta:A\to M(A\otimes C^*(G))$,
the set
\[
\JJ(E)=\JJ_\delta(E):=\{a\in A:E\cdot a=\{0\}\}
\]
is an ideal of $A$ that is \emph{invariant} in the sense that the quotient $A^E:=A/\JJ(E)$ carries a coaction $\delta^E$.
Note that we've replaced the 
dual coaction $(B\rtimes_\alpha G,\what\alpha)$ by an arbitrary coaction $(A,\delta)$.

In this generality, the replacement for the regular representation $\Lambda:B\rtimes_\alpha G\to B\rtimes_{\alpha,r} G$
is the \emph{normalization}
\[
q^n:(A,\delta)\to (A^n,\delta^n),
\]
and we have a commuting diagram
\[
\xymatrix{
A \ar[rr]^-\delta \ar[dd]_{q^n} \ar[dr]^{q^E}
&&A\otimes C^*(G) \ar'[d]^{q^n\otimes\id}[dd] \ar[dr]^{q^E\otimes \id}
\\
&A^E \ar[rr]^(.3){\delta^E} \ar[dl]
&&A^E\otimes C^*(G) \ar[dl]
\\
A^n \ar[rr]_-{\delta^n}
&&A^n\otimes C^*(G)
}
\]
The aforementioned counterexample of \cite{BusEch} shows that not all 
large quotients of $(A,\delta)$ arise
this way;
nevertheless, we feel that this tool deserved to become more widely known.

Actually, 
our original motivation in writing this paper
involves crossed-product duality;
everything we need can be found in, e.g., \cite[Appendix~A]{BE}, \cite{boiler}, and \cite{ekq},
and in the following few sentences we very briefly recall the essential facts.
The Imai-Takai duality theorem and its 
modernization due to Raeburn say that 
if $\alpha$ is an action of a locally compact group $G$ on a $C^*$-algebra $B$,
there
is a dual coaction $\what\alpha$ of $G$ such that 
$B\rtimes_\alpha G\rtimes_{\what\alpha} G\cong B\otimes\KK(L^2(G))$.
Katayama gave a dual version of crossed-product duality, starting with a coaction $\delta$ of $G$ on a $C^*$-algebra $A$:
there is a dual action $\what\delta$ of $G$ on the crossed product $A\rtimes_\delta G$ such that
$A\rtimes_\delta G\rtimes_{\what\delta,r} G\cong A\otimes\KK$.
However, Katayama used what are nowadays called \emph{reduced} coactions --- 
more recently, crossed-product duality has been reworked in terms of Raeburn's \emph{full} coactions,
and the modern version of Katayama's theorem gives the same isomorphism for (full) coactions that are \emph{normal}, i.e., embed faithfully into $A\rtimes_\delta G$.
On the other hand, it is known that for some other coactions, which are called \emph{maximal}, crossed-product duality uses the \emph{full} crossed product by the dual action:
$A\rtimes_\delta G\rtimes_{\what\delta} G\cong A\otimes\KK$.

Thus, noncommutative crossed-product duality has been complicated by the different choices of action-crossed product (i.e., full vs reduced) from the outset.
But the situation is even more complicated: there exist coactions that are neither normal nor maximal,
so that neither the reduced nor the full version of crossed-product duality holds.
This can be sorted out using the \emph{canonical surjection}
$\Phi:A\rtimes_\delta G\rtimes_{\what\delta} G\to A\otimes\KK$,
which is an isomorphism precisely when the coaction $\delta$ is maximal,
and which factors through an isomorphism
$A\rtimes_\delta G\rtimes_{\what\delta,r} G\cong A\otimes\KK$
precisely when $\delta$ is normal.
Every (full) coaction $(A,\delta)$ 
has a \emph{maximalization} and a \emph{normalization},
meaning it
sits in a diagram
$\psi:(A^m,\delta^m)\to (A,\delta)\to (A^n,\delta^n)$
of equivariant surjections,
where the first and third coactions are maximal and normal, respectively, and all three crossed products are isomorphic.
It follows that the kernel of the canonical surjection $\Phi$ is contained in the kernel of the regular representation
$\Lambda:A\rtimes_\delta G\rtimes_{\what\delta} G\to A\rtimes_\delta G\rtimes_{\what\delta,r} G$,
and hence gives a commuting diagram
\[
\xymatrix{
A\rtimes_\delta G\rtimes_{\what\delta} G \ar[r]^-\Phi \ar[d]^Q
&A\otimes\KK
\\
(A\rtimes_\delta G\rtimes_{\what\delta} G)/\ker\Phi, \ar[ur]_\cong
}
\]
where $Q$ is the quotient map.

Thus the coaction $(A,\delta)$ can be regarded to have a ``type'' determined by how the ideal $\ker\Phi$ sits inside $\ker\Lambda$,
with the maximal coactions corresponding to $\ker\Phi=\{0\}$ and the
normal coactions corresponding to $\ker\Phi=\ker\Lambda$.
We would like to have some more intrinsic way to determine what ``type'' $\delta$ has,
namely the kernel of the maximalization map $A^m\to A$.
So, a natural question arises:
if we start with a maximal coaction $(A,\delta)$
is there some way to classify the ideals of $A$ that give rise to coactions intermediate between $\delta$ and the normalization $\delta^n$, and moreover what can we say about these ideals with regard to crossed-product duality?

As indicated above, here we investigate ideals of $A$ determined by ``large'' ideals of $B(G)$, by which we mean weak* closed $G$-invariant ideals of $B(G)$ containing $B_r(G)$.
In~\secref{prelim} we review some preliminaries on coactions. Then in \secref{once} we show how every large ideal $E$ of $B(G)$ determines a coaction $(A^E,\delta^E)$ on a quotient of $A$.
In \secref{duality} we show that
a quotient coaction $(A/J,\delta_J)$ 
of a maximal coaction $(A,\delta)$
is of the form
$(A^E,\delta^E)$ for some large ideal $E$ of $B(G)$
if and only if it
satisfies
a sort of \emph{$E$-crossed-product duality}, involving what we call the \emph{$E$-crossed product} $A\rtimes_\delta G\rtimes_{\what\delta,E} G$.
During the last stage of writing this paper, we learned that Buss and Echterhoff had also proved one direction of this latter result \cite[Theorem~5.1]{BusEch}; our methods are significantly different from theirs.
In the case of the canonical coaction $(C^*(G),\delta_G)$, we show that the above ideals $E$ of $B(G)$ give a complete classification of the quotient coactions $(A,\delta)$ sitting between $(C^*(G),\delta_G)$ and the normalization $(C^*_r(G),\delta_G^n)$.
After the completion of this paper, we learned of a second paper of Buss and Echterhoff
\cite{BusEch2} that is also relevant to this work.

We originally wondered 
whether every coaction 
satisfies $E$-crossed product duality
for some $E$.
In \cite[Conjecture~6.12]{graded} we even conjectured that this would be true for dual coactions.
However,
the counterexample of
Buss and Echterhoff 
\cite[Example~5.3]{BusEch} gives a negative answer.

From \secref{counter} onward we will restrict to the case of 
coactions satisfying a certain ``slice properness'' condition,
which we introduce in \secref{properness}.
We impose this hypothesis
to make the $B(G)$-module action on $A$ appropriately continuous.
%
After we submitted this manuscript, we learned that our \defnref{proper def} of proper coaction 
is a special case of \cite[Definition 2.4]{Ellwood}, 
which concerns actions of Hopf C*-algebras.
Our definition is also closely related to Condition~(A1) in \cite[Section~4.1]{Goswami-Kuku}, which concerns discrete quantum groups and involves the algebraic tensor product.
We are grateful to the referee for drawing these references to our attention.

In \secref{counter} we give examples
of quotient coactions that are not determined by any large ideal $E$ of $B(G)$.
These examples actually turn out to be similar to (and discovered independently from) those in \cite{BusEch},
although they do not do the full job that those of Buss and Echterhoff do, namely they do not involve the maximalization.

In \secref{twice} we start with a maximal coaction $(A,\delta)$ and two large ideals $E_1\supset E_2$ of $B(G)$, and investigate the question of whether the quotient $(A^{E_1},\delta^{E_1})\to (A^{E_2},\delta^{E_2})$ is determined by any third ideal $E$.
In the case of the canonical coaction $(C^*(G),\delta_G)$, we give a list of equivalent conditions,
although the general question is still left open.
Finally, in \secref{lp} we specialize further to the 
study of ideals $E_p$ obtained from $L^p(G)$, where,
although 
we cannot completely answer the question regarding the quotient
$(A^{E_1},\delta^{E_1})\to (A^{E_2},\delta^{E_2})$, we are at least able to learn enough to obtain examples of intermediate quotients between $C^*(G)$ and $C^*_r(G)$ on which $\delta_G$ descends to a comultiplication (not a coaction!) that fails to be injective.

\section{Preliminaries}\label{prelim}

For the definitions and basic facts about coactions of locally compact groups on $C^*$-algebras and imprimitivity bimodules, we refer to \cite{BE}.  
Here we briefly summarize the less-standard concepts and notation we will need.

If $J$ is an ideal (always closed and two-sided) of $A$ and $Q:A\to A/J$ is the quotient map,
we say that $J$ is \emph{$\delta$-invariant} if
\[
J\subset \ker (Q\otimes\id)\circ\delta;
\]
equivalently
(by \cite[Lemma~3.11]{graded} for example),
if $Q$ is $\delta-\delta_J$ equivariant for a unique coaction $\delta_J$ on $A/J$.
All quotient coactions arise in essentially this way.


\begin{lem}\label{morita}
Suppose $(A,\delta)$ and $(B,\epsilon)$ are two coactions of $G$,
$X$ is an $A-B$ imprimitivity bimodule,
$\zeta$ is a $\delta-\epsilon$ compatible coaction of $G$ on $X$,
$K$ is an $\epsilon$-invariant ideal of $B$, and
$J=X{-}\ind K$ is the Rieffel-equivalent ideal of $A$.
Then $J$ is $\delta$-invariant.
\end{lem}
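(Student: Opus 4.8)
The plan is to verify the defining condition for $\delta$-invariance directly, namely that $\delta(J)\subseteq\ker(Q_J\otimes\id)$, where $Q_J\colon A\to A/J$ is the quotient map, by testing it on a convenient spanning set of $J$ and transporting the computation through $\zeta$. Recall from the Rieffel correspondence that the induced ideal $J=X\dashind K$ is the closed linear span of $\{{}_{A}\langle x,y\cdot k\rangle:x,y\in X,\ k\in K\}$, and that the closed submodule $XK:=\clspn\{x\cdot k:x\in X,\ k\in K\}$ is a $J$--$K$ imprimitivity bimodule for which $X/XK$ is an $A/J$--$B/K$ imprimitivity bimodule. In particular the quotient map $Q\colon X\to X/XK$ is a surjective bimodule morphism over the pair $(Q_J,Q_K)$, where $Q_K\colon B\to B/K$, so that ${}_{A/J}\langle Qx,Qy\rangle=Q_J\bigl({}_{A}\langle x,y\rangle\bigr)$ for all $x,y\in X$. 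Since $(Q_J\otimes\id)\circ\delta$ is continuous and linear and its kernel is closed, it suffices to show that $(Q_J\otimes\id)\,\delta({}_{A}\langle x,y\cdot k\rangle)=0$ for each such generator.

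First I would set up the machine that carries this to the quotient. Passing to linking algebras, $Q$ is a corner of a surjective homomorphism $L(X)\to L(X/XK)$; tensoring with $\id_{C^*(G)}$ and extending canonically to multiplier algebras yields a homomorphism whose corner is a strictly continuous extension $\Psi\colon M(X\otimes C^*(G))\to M((X/XK)\otimes C^*(G))$ of $Q\otimes\id$. By construction $\Psi$ covers the multiplier extensions of $Q_J\otimes\id$ and $Q_K\otimes\id$, respects the right action, $\Psi(\xi\cdot b)=\Psi(\xi)\cdot(Q_K\otimes\id)(b)$, and intertwines the left inner product,
\[
{}_{(A/J)\otimes C^*(G)}\langle\Psi\xi,\Psi\eta\rangle=(Q_J\otimes\id)\bigl({}_{A\otimes C^*(G)}\langle\xi,\eta\rangle\bigr)
\qquad(\xi,\eta\in M(X\otimes C^*(G))).
\]

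Next I would run the computation. Since $\zeta$ is a $\delta$--$\epsilon$ compatible coaction on the imprimitivity bimodule $X$ it intertwines both module actions and both inner products; in particular $\zeta(y\cdot k)=\zeta(y)\cdot\epsilon(k)$ and $\delta({}_{A}\langle x,y\rangle)={}_{A\otimes C^*(G)}\langle\zeta(x),\zeta(y)\rangle$. Hence
\[
\delta({}_{A}\langle x,y\cdot k\rangle)={}_{A\otimes C^*(G)}\langle\zeta(x),\zeta(y)\cdot\epsilon(k)\rangle ,
\]
and applying $Q_J\otimes\id$ through $\Psi$ gives
\[
(Q_J\otimes\id)\,\delta({}_{A}\langle x,y\cdot k\rangle)={}_{(A/J)\otimes C^*(G)}\langle\Psi\zeta(x),\ \Psi\zeta(y)\cdot(Q_K\otimes\id)\epsilon(k)\rangle .
\]
Now the hypothesis that $K$ is $\epsilon$-invariant says exactly $\epsilon(K)\subseteq\ker(Q_K\otimes\id)$, so $(Q_K\otimes\id)\epsilon(k)=0$ and the right-hand side vanishes. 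By density of the generators and continuity this yields $\delta(J)\subseteq\ker(Q_J\otimes\id)$, i.e.\ $J$ is $\delta$-invariant.

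The main obstacle will be bookkeeping at the level of multipliers rather than any deep point: $\zeta(x)$ and $\epsilon(k)$ are multipliers, so one must justify that $Q\otimes\id$ extends to the multiplier bimodules while preserving the inner-product and module-action identities used above, which is precisely where the linking-algebra packaging earns its keep. A pleasant consequence of organizing the argument this way is that it sidesteps the failure of exactness of $C^*(G)$: at no point does one compare $\ker(Q_K\otimes\id)$ with $K\otimes C^*(G)$ (which may genuinely differ), because the annihilation $(Q_K\otimes\id)\epsilon(k)=0$ is applied directly. I would also double-check the inner-product conventions (which variable is linear, and whether the $C^*(G)$-factor enters as $st^*$ or $s^*t$), but these affect none of the steps, and the companion left-inner-product compatibility of $\zeta$ can be obtained, if desired, from the induced compatible coaction on the dual module $\widetilde X$.
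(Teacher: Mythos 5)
Your proof is correct and follows essentially the same route as the paper's: both reduce to the spanning elements ${}_A\langle\xi,\eta\cdot b\rangle$ of $J$, use the compatibility of $\zeta$ with the module actions and inner products, and finish by invoking the hypothesis $b\in\ker(R\otimes\id)\circ\epsilon$, where $R:B\to B/K$ is the quotient map. The only real difference is the multiplier bookkeeping: where you extend $Q\otimes\id$ to the multiplier bimodules wholesale via the linking-algebra homomorphism $\Psi$, the paper instead observes that it suffices for the multiplier $(Q\otimes\id)\circ\delta\bigl({}_A\langle\xi,\eta\cdot b\rangle\bigr)$ to annihilate the quotient module $(X/X\cdot K)\otimes C^*(G)$, and verifies this by testing against elements $(S\otimes\id)(\kappa)$ with $\kappa\in X\otimes C^*(G)$, pulling $\epsilon(b)^*$ out of the right inner product.
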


\begin{proof}
$J$ is densely spanned by elements of the form ${}_A\<\xi,\eta\cdot b\>$, where $\xi,\eta\in X$ and $b\in K$.
Let $Q:A\to A/J$ {and} $R:B\to B/K$
be the quotient maps.
We want to show that
\[
(Q\otimes\id)\circ \delta\bigl({}_A\<\xi,\eta\cdot b\>\bigr)=0.
\]
Since $X$ is an $A-B$ imprimitivity bimodule,
$X\otimes C^*(G)$ is an $(A\otimes C^*(G))-(B\otimes C^*(G)))$ imprimitivity bimodule.
The quotient map $S:X\to X/X\cdot K$ is a $Q-R$ compatible imprimitivity bimodule homomorphism,
so
\[
S\otimes \id:(X\otimes C^*(G))\to (X/X\cdot K\otimes C^*(G))
\]
is a $(Q\otimes\id)-(R\otimes\id)$ compatible imprimitivity bimodule homomorphism.
It suffices to show that the multiplier
\[
(Q\otimes\id)\circ \delta\bigl({}_A\<\xi,\eta\cdot b\>\bigr)\in M(A/J\otimes C^*(G))
\]
kills every element of the module $X/X\cdot K\otimes C^*(G)$, and we can take this arbitrary element to be of the form $(S\otimes\id)(\kappa)$, where $\kappa\in X\otimes C^*(G)$.
We compute:
\begin{align*}
&(Q\otimes\id)\circ \delta\bigl({}_A\<\xi,\eta\cdot b\>\bigr)\cdot (S\otimes\id)(\kappa)
\\&\quad=(S\otimes\id)\Bigl({}_{M(A\otimes C^*(G))}\bigl\<\zeta(\xi),\zeta(\eta\cdot b)\bigr\>\cdot \kappa\Bigr)
\\&\quad=(S\otimes\id)\Bigl(\zeta(\xi)\cdot\bigl\<\zeta(\eta)\cdot \epsilon(b),\kappa\bigr\>_{M(B\otimes C^*(G))}\Bigr)
\\&\quad=(S\otimes\id)\Bigl(\zeta(\xi)\cdot\epsilon(b)^*\bigl\<\zeta(\eta),\kappa\bigr\>_{M(B\otimes C^*(G))}\Bigr)
\\&\quad=(S\otimes\id)\circ\zeta(\xi)\cdot(R\otimes\id)\Bigl(\epsilon(b)^*\bigl\<\zeta(\eta),\kappa\bigr\>_{M(B\otimes C^*(G))}\Bigr)
\\&\quad=(S\otimes\id)\circ\zeta(\xi)\cdot(R\otimes\id)\circ\epsilon(b)^*(R\otimes\id)\Bigl(\bigl\<\zeta(\eta),\kappa\bigr\>_{M(B\otimes C^*(G))}\Bigr)
\\&\quad=0,
\end{align*}
since $b\in \ker(R\otimes\id)\circ\epsilon$.
\end{proof}

Adapting
the definition from 
\cite[Definition~2.7]{lprs}, 
where it appears for reduced coactions, we 
say that a  unitary $U$ in $M(A\otimes C^*(G))$ is a \emph{cocycle}
for a coaction $(A,\delta)$ if 
\begin{enumerate}
\item $\id\otimes\delta_G(U) = (U\otimes 1)(\delta\otimes\id(U))$, and
\item $U\delta(A)U^*(1\otimes C^*(G))\subset A\otimes C^*(G)$.
\end{enumerate}

Note that (ii) above implies
\[
(1\otimes C^*(G))U\delta(A)U^*\subset A\otimes C^*(G).
\]

It is mentioned in \cite{lprs} that
in this case $\ad U\circ\delta$ is also a coaction,
which is said to be \emph{exterior equivalent} to $\delta$.
However, there is a disconnect here: in \cite{lprs}, the definition of coaction on a $C^*$-algebra did not include the nondegeneracy condition
\begin{equation}\label{coaction nondegenerate}
\clspn\{\delta(A)(1\otimes C^*(G)\}=A\otimes C^*(G),
\end{equation}
whereas nowadays this condition is built into the definition of coaction.
Thus (modulo the passage from \emph{reduced} to \emph{full} coactions --- see \cite{boiler}), $\epsilon=\ad U\circ\delta$ satisfies all the conditions in the definition of coaction except, ostensibly, nondegeneracy.
In \cite[Paragraph preceding Lemma~2.6]{ekq},
it is stated that nondegeneracy of $\epsilon$ follows from that of $\delta$,
and the justification is that exterior equivalent coactions are Morita equivalent,
and \cite[Proposition~2.3]{kq:imprimitivity} shows that Morita equivalence of $C^*$-coactions preserves nondegeneracy.
Somehow irritating, the observation that  exterior equivalence
implies Morita equivalence for coactions seems not to be readily available in the literature, so for completeness we record the details here.

\begin{prop}\label{exterior}
Let $U$ be a cocycle for a coaction $\delta$ of $G$ on $A$,
and let $\epsilon=\ad U\circ\delta$ be the associated exterior equivalent coaction.
Let $X$ be the standard $A-A$ imprimitivity bimodule,
and define $\zeta:X\to M(X\otimes C^*(G))$ by
\[
\zeta(x)=U\delta(x)\righttext{for}x\in X=A.
\]
Then $\zeta$ is an $\epsilon-\delta$ compatible coaction.
\end{prop}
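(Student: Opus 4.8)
The plan is to check the two defining conditions for an $\epsilon$-$\delta$ compatible coaction on the standard bimodule $X=A$: that $\zeta$ is an $\epsilon$-$\delta$ compatible correspondence homomorphism, and that it satisfies the coaction identity $(\zeta\otimes\id)\circ\zeta=(\id\otimes\delta_G)\circ\zeta$. By the remark following the definition, once these hold the containment $(1\otimes C^*(G))\zeta(X)\cup\zeta(X)(1\otimes C^*(G))\subset X\otimes C^*(G)$ and the nondegeneracy of $\zeta$ come for free, so I need only address these two points.

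First I would verify compatibility. On the standard $A$-$A$ bimodule the operations are $a\cdot x=ax$, $x\cdot b=xb$, ${}_A\<x,y\>=xy^*$, and $\<x,y\>_A=x^*y$, and each condition reduces to a one-line computation using only that $\delta$ is multiplicative and $U$ is unitary. For the right structure, $\zeta(x\cdot b)=U\delta(xb)=U\delta(x)\delta(b)=\zeta(x)\delta(b)$ and $\zeta(x)^*\zeta(y)=\delta(x)^*U^*U\delta(y)=\delta(x^*y)$; for the left structure, $\zeta(a\cdot x)=U\delta(a)\delta(x)=U\delta(a)U^*\,U\delta(x)=\epsilon(a)\zeta(x)$ and $\zeta(x)\zeta(y)^*=U\delta(x)\delta(y)^*U^*=\epsilon(xy^*)$. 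Thus $\zeta$ is an $\epsilon$-$\delta$ compatible correspondence homomorphism.

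For the coaction identity, the right-hand side is unambiguous: since $\id\otimes\delta_G$ is a homomorphism, cocycle condition (i) and the coaction identity for $\delta$ give
\begin{align*}
(\id\otimes\delta_G)(\zeta(x))
&=(\id\otimes\delta_G)(U)\,(\id\otimes\delta_G)(\delta(x))\\
&=(U\otimes1)(\delta\otimes\id)(U)\,(\delta\otimes\id)(\delta(x))\\
&=(U\otimes1)(\delta\otimes\id)(\zeta(x)).
\end{align*}
So it remains to show that $(\zeta\otimes\id)(\zeta(x))$ equals this same expression.

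This last step is the main obstacle: $\zeta\otimes\id$ is only a correspondence homomorphism, so evaluating it on the multiplier $\zeta(x)\in M(X\otimes C^*(G))$ is governed by the module actions rather than by multiplicativity, and a careless extension can produce a spurious conjugation by $U\otimes1$ in place of the correct left multiplication. I would circumvent this by passing to the linking algebra $L=M_2(A)$ of the standard bimodule. Let $\delta_L\colon L\to M(L\otimes C^*(G))$ apply $\delta$ entrywise (its coaction identity holds entrywise from that of $\delta$), and set $W=\smtx{U&0\\0&1}\in M(L\otimes C^*(G))$; then cocycle condition (i) for $U$ says exactly that $\id\otimes\delta_G(W)=(W\otimes1)(\delta_L\otimes\id(W))$, and $\Theta:=\ad W\circ\delta_L$ restricts to $\epsilon$, $\zeta$, and $\delta$ on the three nonzero corners. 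Since $\Theta\otimes\id=\ad(W\otimes1)\circ(\delta_L\otimes\id)$, the cocycle identity for $W$ together with the coaction identity for $\delta_L$ makes the routine computation ``conjugation by a cocycle preserves the coaction identity'' go through, yielding $(\Theta\otimes\id)\circ\Theta=(\id\otimes\delta_G)\circ\Theta$; reading off the upper-right corner gives precisely $(\zeta\otimes\id)\circ\zeta=(\id\otimes\delta_G)\circ\zeta$. This linking-algebra device is what lets me avoid the delicate direct analysis of the multiplier extension of $\zeta\otimes\id$, and it is the step I expect to require the most care.
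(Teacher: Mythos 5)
Your proof is correct, and it is considerably more thorough than the paper's, which occupies only a few lines. The paper verifies exactly two of your compatibility conditions --- after noting that $\zeta(X)\subset M(X\otimes C^*(G))=\LL_{A\otimes C^*(G)}(A\otimes C^*(G),X\otimes C^*(G))$, it computes $\zeta(a\cdot x)=\epsilon(a)\cdot\zeta(x)$ and $\<\zeta(x),\zeta(y)\>_{A\otimes C^*(G)}=\delta(x^*y)$ --- and then declares the proof finished by citing \cite[Definition~1.14 and Remark~1.17~(2)]{BE}; that remark makes your right-module and left-inner-product computations automatic for imprimitivity bimodules, so on this part your argument agrees with the paper's modulo the citation. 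The genuine difference is the comultiplication identity $(\zeta\otimes\id)\circ\zeta=(\id\otimes\delta_G)\circ\zeta$: the paper never verifies it explicitly (it is left implicit, as the standard consequence of cocycle condition (i)), whereas you prove it via the linking algebra $L=M_2(A)$, with $\delta_L$ applied entrywise, $W=\smtx{U&0\\0&1}$ a $\delta_L$-cocycle, and $\Theta=\ad W\circ\delta_L$ restricting to $\epsilon$, $\zeta$, $\delta$ on the corners. That device is sound: the cocycle identity for $W$ and the coaction identity for $\delta_L$ hold entrywise, all maps involved are nondegenerate so their multiplier extensions are multiplicative, the standard computation then gives $(\Theta\otimes\id)\circ\Theta=(\id\otimes\delta_G)\circ\Theta$, and reading off the upper-right corner is exactly how the multiplier extension of $\zeta\otimes\id$ is handled in the EKQR framework. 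So where the two proofs overlap, the computations are identical; your route buys a self-contained verification of the coaction identity --- correctly isolating the multiplier-extension subtlety that makes a naive direct evaluation of $(\zeta\otimes\id)$ on $\zeta(x)\in M(X\otimes C^*(G))$ unreliable --- while the paper buys brevity by delegating everything beyond its two displayed computations to \cite{BE}.
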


\begin{proof}
First of all, it is clear that
\[
\zeta(X)\subset M(X\otimes C^*(G))=\LL_{A\otimes C^*(G)}(A\otimes C^*(G),X\otimes C^*(G)).
\]
For $a\in A$ and $x,y\in X$ we have
\begin{align*}
\zeta(a\cdot x)
&=U\delta(ax)
=U\delta(a)\delta(x)
=\epsilon(a)U\delta(x)
=\epsilon(a)\cdot \zeta(x),
\end{align*}
and
\begin{align*}
\<\zeta(x),\zeta(y)\>_{A\otimes C^*(G)}
&=(U\delta(x))^*(U\delta(y))
=\delta(x^*)U^*U\delta(y)
=\delta(x^*y).
\end{align*}
By \cite[Definition~1.14 and Remark~1.17 (2)]{BE},
it now follows that $\epsilon$ is a
\emph{possibly degenerate} coaction.  
But since $\delta$ does satisfy \eqref{coaction nondegenerate}
by assumption, we can safely appeal to \cite[Proposition~2.3]{kq:imprimitivity} to conclude that $\epsilon$ is also nondegenerate.
\end{proof}


\begin{rem}\label{double dual}
It follows from \cite[Lemma~3.8 and its proof]{ekq} that
if we define $W=(M\otimes\id)(w_G)\in M(\KK(L^2(G))\otimes C^*(G))$,
and let
$\delta\otimes_*\id$ denote the coaction $(\id\otimes\Sigma)\circ(\delta\otimes\id)$,
where $\Sigma$ is the flip map on $C^*(G)\otimes C^*(G)$,
then $1\otimes W^*$ is a cocycle for $\delta\otimes_*\id$, and 
the canonical surjection
$\Phi:A\rtimes_\delta G\rtimes_{\what\delta} G\to A\otimes\KK(L^2(G))$ is
$\what{\what\delta}-\ad (1\otimes W^*)\circ(\delta\otimes_*\id)$ equivariant.
\end{rem}

There are several choices for the conventions regarding a \emph{Galois correspondence} between partially ordered sets $X$ and $Y$ ---
we will take this to mean a pair of order-reversing functions $f:X\to Y$ and $g:Y\to X$ such that
\[
\id_X\le g\circ f
\midtext{and}
\id_Y\le f\circ g.
\]
These properties have the following well-known consequences:
\begin{enumerate}
\item $f\circ g\circ f=f$ and $g\circ f\circ g=g$;
\item $f(x)\ge y$ if and only if $x\le g(y)$;
\item $g\circ f(x)=g\circ f(x')\Rightarrow f(x)=f(x')$;
\item $f\circ g(y)=f\circ g(y')\Rightarrow g(y)=g(y')$.
\end{enumerate}

\section{$E$-determined coactions}\label{once}

In this section we show how certain ideals of $B(G)$ produce quotients of coactions, although we will begin with 
quite general subsets of $B(G)$.

We recall some notation and results from \cite{graded}.
For any weak*-closed subspace $E\subset B(G)$,
the preannihilator $\pann E$ in $C^*(G)$ is a (closed, two-sided) ideal
if and only if $E$ is invariant under the $G$-bimodule action,
if and only if $E$ is invariant under the $C^*(G)$-bimodule action.
Write $C^*_E(G)=C^*(G)/\pann E$, and let $q_E:C^*(G)\to C^*_E(G)$ be the quotient map.
The dual map $q_E^*:C^*_E(G)^*\to B(G)$ is an isometric isomorphism onto $E$, and we identify $E$ with $C^*_E(G)^*$ and regard $q_E^*$ as the inclusion map.
The canonical coaction $\delta_G$ on $C^*(G)$ descends to a coaction $\delta_G^E$ on $C^*_E(G)$ if and only if $E$ is an ideal of $B(G)$.

\begin{defn}
We call an ideal of $B(G)$ \emph{large} if it is weak* closed, $G$-invariant, and contains $B_r(G)$;
by \cite[Lemma~3.14]{graded} the latter containment condition is satisfied as long as the ideal is nonzero.
\end{defn}

\begin{defn}\label{J}
Let $(A,\delta)$ be a coaction.
For any weak* closed subspace $E\subset B(G)$, define
\[
\JJ(E)=\JJ_\delta(E)=\{a\in A:f\cdot a=0\text{ for all }f\in E\}.
\]
\end{defn}

\begin{thm}
For any weak* closed $G$-invariant subspace $E$ of $B(G)$,
\[
\JJ(E)=\ker(\id\otimes q_E)\circ\delta.
\]
\end{thm}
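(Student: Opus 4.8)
The plan is to prove the two inclusions separately, in each case unwinding the module action as $f\cdot a=(\id\otimes f)\circ\delta(a)$, so that $\JJ(E)=\{a\in A:(\id\otimes f)\delta(a)=0\text{ for all }f\in E\}$. Throughout I will use the recalled fact that $q_E^*$ identifies $C^*_E(G)^*$ isometrically with $E$; concretely, a functional $g\in B(G)$ lies in $E$ exactly when it annihilates $\pann E=\ker q_E$, i.e.\ when it factors as $g=\phi\circ q_E$ for a unique $\phi\in C^*_E(G)^*$.

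For the inclusion $\ker(\id\otimes q_E)\circ\delta\subset\JJ(E)$, suppose $(\id\otimes q_E)\delta(a)=0$ and fix $g\in E$, say $g=\phi\circ q_E$. Since $q_E$ is a nondegenerate homomorphism it extends to multipliers, and slicing in the second variable is compatible with composition there, so that $(\id\otimes g)=(\id\otimes\phi)\circ(\id\otimes q_E)$ as maps $M(A\otimes C^*(G))\to M(A)$. Applying this to $\delta(a)$ gives $g\cdot a=(\id\otimes\phi)\bigl((\id\otimes q_E)\delta(a)\bigr)=0$; as $g\in E$ was arbitrary, $a\in\JJ(E)$. The only point needing care here is the multiplier-level slice identity, which follows by strict continuity of the slice maps together with the facts that $\delta(a)(1\otimes e_\lambda)\to\delta(a)$ strictly for an approximate unit $(e_\lambda)$ of $C^*(G)$ and that the identity is clear on the genuine tensor product.

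For the reverse inclusion $\JJ(E)\subset\ker(\id\otimes q_E)\circ\delta$, let $a\in\JJ(E)$; I must show the multiplier $m:=(\id\otimes q_E)\delta(a)\in M(A\otimes C^*_E(G))$ vanishes. First I reduce to genuine tensor-product elements: since $q_E$ is surjective, $m=0$ iff $m(1\otimes q_E(c))=0$ for all $c\in C^*(G)$, and $m(1\otimes q_E(c))=(\id\otimes q_E)\bigl(\delta(a)(1\otimes c)\bigr)$, where now $\delta(a)(1\otimes c)\in A\otimes C^*(G)$ by the nondegeneracy condition \eqref{coaction nondegenerate}. For a genuine element $x\in A\otimes C^*(G)$, the product functionals $\omega\otimes\phi$ ($\omega\in A^*$, $\phi\in C^*_E(G)^*$) separate points of $A\otimes C^*_E(G)$, and $(\omega\otimes\phi)\bigl((\id\otimes q_E)x\bigr)=\omega\bigl((\id\otimes(\phi\circ q_E))x\bigr)$; hence $(\id\otimes q_E)x=0$ iff $(\id\otimes g)x=0$ for every $g\in E$. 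Thus it remains to check that $(\id\otimes g)\bigl(\delta(a)(1\otimes c)\bigr)=0$ for all $g\in E$ and $c\in C^*(G)$.

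Here I use the slice identity $(\id\otimes g)\bigl(\delta(a)(1\otimes c)\bigr)=(\id\otimes g_c)\delta(a)$, where $g_c\in B(G)$ is defined by $g_c(b)=g(bc)$; this is immediate on elementary tensors and extends by continuity. The functional $g_c$ is precisely the $C^*(G)$-module translate of $g$, so $g_c\in E$ because $E$ is invariant under the $C^*(G)$-bimodule action (equivalently, $G$-invariant). Since $a\in\JJ(E)$, we conclude $(\id\otimes g_c)\delta(a)=g_c\cdot a=0$, completing the argument. I expect the main obstacle to be precisely this bookkeeping in the multiplier algebra --- reducing the vanishing of $m$ to genuine elements via nondegeneracy, and confirming that the translated functional $g_c$ remains in $E$ --- rather than any conceptual difficulty; once these are in place, both inclusions are short.
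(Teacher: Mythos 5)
Your proof is correct, and its skeleton --- identifying $E$ with $C^*_E(G)^*$ via $q_E^*$ and exploiting the identity $(\id\otimes f)\circ(\id\otimes q_E)=\id\otimes(f\circ q_E)$ --- is the same as the paper's. The execution differs in one genuine way, though. The paper disposes of both inclusions in a single biconditional chain: since the slice maps $\id\otimes f$ for $f\in E=C^*_E(G)^*$ separate the points of $A\otimes C^*_E(G)$ (tacitly, of its multiplier algebra, via the strict extensions of the slice maps), $a\in\ker(\id\otimes q_E)\circ\delta$ if and only if $f\cdot a=0$ for all $f\in E$. You instead prove the harder inclusion $\JJ(E)\subset\ker(\id\otimes q_E)\circ\delta$ without invoking separation of points at the multiplier level: you cut the multiplier $(\id\otimes q_E)\circ\delta(a)$ down to genuine elements of $A\otimes C^*_E(G)$ by multiplying by $1\otimes q_E(c)$ (using surjectivity of $q_E$ and the nondegeneracy axiom $\delta(a)(1\otimes c)\in A\otimes C^*(G)$), and you then pay for this reduction with the translated functional $g_c$, whose membership in $E$ requires the $C^*(G)$-bimodule invariance of $E$ (equivalent to $G$-invariance for weak*-closed subspaces, as recalled at the start of \secref{once}). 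So your argument uses the invariance hypothesis twice --- once to make $q_E$ well defined and once to keep $g_c\in E$ --- whereas the paper's uses it only for the first purpose; in exchange, your version is more self-contained, replacing the multiplier-level separation fact by an explicit elementary reduction to $A\otimes C^*(G)$. Both are complete proofs; yours is longer but makes explicit the multiplier-algebra bookkeeping that the paper's three-line computation leaves implicit.
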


\begin{proof}
We can identify $E$ with $C^*_E(G)^*$, and the dual map $q_E^*:C^*_E(G)^*\to C^*(G)^*$ with the inclusion map $E\hookrightarrow B(G)$.
Since the slice maps $\id\otimes f$ for $f\in E$ separate the points of $A\otimes C^*_E(G)$, if $a\in A$ then
$a\in \ker(\id\otimes q_E)\circ\delta$ if and only if for all $f\in E$ we have
\begin{align*}
f\cdot a
=(\id\otimes f)\circ\delta(a)
&=(\id\otimes q_E^*)(f)\circ\delta(a)\\
&=(\id\otimes f)\circ (\id\otimes q_E)\circ\delta(a)
=0;
\end{align*}
i.e., if and only if $a\in \JJ(E)$.
\end{proof}

\begin{cor}
For every weak* closed $G$-invariant subspace $E$ of $B(G)$, $\JJ(E)$ is an ideal of $A$.
\end{cor}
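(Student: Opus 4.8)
The plan is to deduce the corollary immediately from the preceding theorem, which identifies $\JJ(E)$ with the kernel of the map $(\id\otimes q_E)\circ\delta$. Since the kernel of any $*$-homomorphism between $C^*$-algebras is automatically a closed two-sided ideal, it suffices to verify that $(\id\otimes q_E)\circ\delta$ is a $*$-homomorphism from $A$ into a $C^*$-algebra, here the multiplier algebra $M(A\otimes C^*_E(G))$.

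First I would unpack the two factors of the composite. By \defnref{coaction}, the coaction $\delta:A\to M(A\otimes C^*(G))$ is a nondegenerate $*$-homomorphism. The quotient map $q_E:C^*(G)\to C^*_E(G)$ is a surjective $*$-homomorphism, so by functoriality of the minimal tensor product in one variable, $\id\otimes q_E:A\otimes C^*(G)\to A\otimes C^*_E(G)$ is a well-defined $*$-homomorphism; it is nondegenerate because both $\id$ and $q_E$ are, and hence extends canonically to a $*$-homomorphism $M(A\otimes C^*(G))\to M(A\otimes C^*_E(G))$, which (as agreed in \secref{prelim}) we denote by the same symbol. Composing, $(\id\otimes q_E)\circ\delta:A\to M(A\otimes C^*_E(G))$ is a $*$-homomorphism, and its kernel $\JJ(E)$ is therefore a closed two-sided ideal of $A$.

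I expect essentially no obstacle here: once the preceding theorem is in hand, the corollary is immediate from the general principle that kernels of $*$-homomorphisms are ideals. The only point that is not entirely automatic is the well-definedness of $\id\otimes q_E$ on the minimal tensor product and its passage to multiplier algebras, but this is a standard consequence of the functoriality of the minimal tensor product under a one-sided $*$-homomorphism together with nondegeneracy; no computation specific to $E$ or to $\delta$ is required.
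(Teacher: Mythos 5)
Your proposal is correct and matches the paper's (implicit) reasoning exactly: the corollary is stated without proof precisely because, given the preceding theorem's identification $\JJ(E)=\ker(\id\otimes q_E)\circ\delta$, it follows immediately from the fact that kernels of $*$-homomorphisms are closed two-sided ideals. Your care about the well-definedness of $\id\otimes q_E$ on the minimal tensor product is sound --- functoriality for $*$-homomorphisms in one variable always holds (it is exactness, not existence of the map, that can fail) --- so there is nothing missing.
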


\begin{lem}\label{invariant}
For every coaction $(A,\delta)$ and every weak* closed $G$-invariant ideal of $E$ of $B(G)$, the ideal $\JJ(E)$ of $A$ is $\delta$-invariant.
\end{lem}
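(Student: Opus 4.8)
The plan is to leverage the formula just proved, which identifies $\JJ(E)$ with the kernel of the homomorphism $\rho:=(\id\otimes q_E)\circ\delta\colon A\to M(A\otimes C^*_E(G))$. Since $E$ is an ideal, the canonical coaction $\delta_G$ descends to a coaction $\delta_G^E$ on $C^*_E(G)$; this is exactly the assertion that $q_E$ intertwines $\delta_G$ and $\delta_G^E$, i.e.\ $(q_E\otimes\id)\circ\delta_G=\delta_G^E\circ q_E$. The goal is to show that $\JJ(E)=\ker\rho$ is contained in $\ker(Q\otimes\id)\circ\delta$, where $Q\colon A\to A/\JJ(E)$ is the quotient map, which is precisely the definition of $\delta$-invariance.

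The key computational step is to apply the map $\id\otimes q_E\otimes\id$ to the coaction identity $(\delta\otimes\id)\circ\delta=(\id\otimes\delta_G)\circ\delta$. On the left-hand side this pushes the middle leg through $q_E$ and produces $(\rho\otimes\id)\circ\delta$; on the right-hand side it produces $(\id\otimes(q_E\otimes\id)\delta_G)\circ\delta$, which by equivariance equals $(\id\otimes\delta_G^E)\circ\rho$. This yields the intertwining relation
\[
(\rho\otimes\id)\circ\delta=(\id\otimes\delta_G^E)\circ\rho .
\]
In particular, for $a\in\JJ(E)=\ker\rho$ the right-hand side vanishes, so $(\rho\otimes\id)(\delta(a))=0$.

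To finish I would factor $\rho$ through the quotient as $\rho=\bar\rho\circ Q$, where $\bar\rho\colon A/\JJ(E)\to M(A\otimes C^*_E(G))$ is injective (because $\ker\rho=\ker Q$) and nondegenerate (inherited from nondegeneracy of $\delta$). Then $(\rho\otimes\id)(\delta(a))=(\bar\rho\otimes\id)\bigl((Q\otimes\id)(\delta(a))\bigr)=0$, and I would conclude $(Q\otimes\id)(\delta(a))=0$, giving $\JJ(E)\subseteq\ker(Q\otimes\id)\circ\delta$ as required.

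The main obstacle is exactly this last deduction: I need that tensoring the injective nondegenerate homomorphism $\bar\rho$ with $\id_{C^*(G)}$ and then extending to multiplier algebras preserves injectivity. Injectivity of $\bar\rho\otimes\id$ on the minimal tensor product $A/\JJ(E)\otimes C^*(G)$ is standard, since the minimal tensor product of injective $*$-homomorphisms is injective; the slightly delicate point is that the canonical extension of a nondegenerate injective homomorphism $\varphi$ to multiplier algebras stays injective, which holds because $\bar\varphi(m)=0$ forces $\varphi(mc)=\bar\varphi(m)\varphi(c)=0$, hence $mc=0$ for all $c$ and so $m=0$. Once this is in place the argument closes. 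Alternatively, one can sidestep the multiplier subtlety by slicing the displayed identity against functionals $\phi\in C^*(G)^*$ on the last leg: this gives $\rho\bigl((\id\otimes\phi)\delta(a)\bigr)=0$, so each $(\id\otimes\phi)\delta(a)$ lies in $\JJ(E)$, and then $(\id\otimes\phi)(Q\otimes\id)\delta(a)=Q\bigl((\id\otimes\phi)\delta(a)\bigr)=0$ for all $\phi$, whence $(Q\otimes\id)\delta(a)=0$.
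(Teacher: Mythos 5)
Your proof is correct, but your main argument takes a genuinely different route from the paper's. The paper works directly with functionals: it first shows $\JJ(E)$ is a $B(G)$-submodule of $A$ using only the ideal property of $E$ (for $a\in\JJ(E)$, $f\in B(G)$, $g\in E$ one has $g\cdot(f\cdot a)=(gf)\cdot a=0$ since $gf\in E$), and then kills $(Q\otimes\id)\circ\delta(a)$ by pairing with elementary functionals $\omega\otimes f$ with $\omega\in(A/\JJ(E))^*$ and $f\in B(G)$, reducing everything to $Q^*\omega(f\cdot a)=0$. You instead import the fact, recalled at the start of \secref{once} from \cite{graded}, that $\delta_G$ descends to $\delta_G^E$ precisely when $E$ is an ideal, derive the intertwining relation $(\rho\otimes\id)\circ\delta=(\id\otimes\delta_G^E)\circ\rho$ from the coaction identity, and finish via injectivity of $\bar\rho\otimes\id$ and of its multiplier extension; your handling of the delicate points is sound (nondegeneracy of $\rho$ is inherited from $\delta$, the minimal tensor product of injective $*$-homomorphisms is injective, and your factorization argument for injectivity of the strict extension is the standard one). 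What each approach buys: the paper's proof is shorter and more self-contained, using the ideal hypothesis only through $gf\in E$; yours is heavier but structurally more informative, since the equivariance of $\rho$ exhibits the quotient coaction concretely as the one induced by $\id\otimes\delta_G^E$ on the image of $A^E$ inside $M(A\otimes C^*_E(G))$ (compare the map $\tilde\delta$ constructed in the proof of \lemref{ampliate}). Note also that your ``alternative'' at the end --- slicing by $\phi\in C^*(G)^*=B(G)$, observing $\phi\cdot a\in\JJ(E)$, and using that slices separate points --- is essentially the paper's own proof, with the submodule property appearing there as the statement $\phi\cdot a\in\JJ(E)$; the only point you leave tacit is that $\phi\cdot a=(\id\otimes\phi)\circ\delta(a)$ lies in $A$ rather than merely in $M(A)$, which follows from nondegeneracy of $\delta$ together with Cohen--Hewitt factorization, as the paper remarks in \secref{properness}.
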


\begin{proof}
We first show that $\JJ(E)$ is a $B(G)$-submodule: if $a\in \JJ(E)$, $f\in B(G)$, and $g\in E$ then
\[
g\cdot (f\cdot a)=(gf)\cdot a=0
\]
because $gf\in E$ as $E$ is an ideal. Thus $f\cdot a\in\JJ(E)$.

Let $Q:A\to A/\JJ(E)$ be the quotient map.
We must show that if $a\in \ker Q=\JJ(E)$ then $(Q\otimes\id)\circ\delta(a)=0$,
and it suffices to observe that for all $\omega\in (A/\JJ(E))^*$ and $f\in B(G)$ we have
\begin{align*}
(\omega\otimes f)\circ(Q\otimes\id)\circ\delta(a)
&=(Q^*\omega\otimes f)\circ\delta(a)
=Q^*\omega(f\cdot a)
=0,
\end{align*}
because $Q^*\omega\in \JJ(E)\ann$ and $f\cdot a\in \JJ(E)$.
\end{proof}

\begin{notn}
For a weak* closed $G$-invariant ideal $E$ of $B(G)$, let $A^E=A/\JJ(E)$, and let $\delta^E$ be the associated quotient coaction on $A^E$, whose existence is assured by \lemref{invariant} and \cite[Lemma~3.11]{graded}.
\end{notn}

We are quite interested in coactions that arise in this way;
slightly more generally, we are interested in 
equivariant surjections $\varphi:A\to B$ for which $\ker\varphi=\JJ(E)$, so that 
there an isomorphism $\theta$ making the diagram
\[
\xymatrix{
(A,\delta) \ar[d]_Q \ar[dr]^\varphi
\\
(A^E,\delta^E) \ar[r]_-\theta^-\cong
&(B,\epsilon)
}
\]
commute, where $Q$ is the quotient map.

\begin{defn}\label{E-determined}
For a large ideal $E$ of $B(G)$
and an equivariant surjection $\varphi:(A,\delta)\to (B,\epsilon)$,
we say $(B,\epsilon)$
is
\emph{$E$-determined from $(A,\delta)$},
or just \emph{$E$-determined} when $(A,\delta)$ is understood,
if
$\ker\varphi=\JJ_\delta(E)$.
\end{defn}

\begin{ex}
Standard coaction theory guarantees that the normalization $(A^n,\delta^n)$ is $B_r(G)$-determined from $(A,\delta)$, and $(A,\delta)$ is $B(G)$-determined from itself, because $q_{B(G)}$ is the identity map.
\end{ex}

\thmref{counterexample}
gives  examples showing that not every quotient of a coaction $(A,\delta)$ is necessarily $E$-determined by some large ideal $E$ of $B(G)$.
\cite[Example~5.4]{BusEch} gives examples where the coaction $(A,\delta)$ is maximal.

\begin{defn}
Let $(A,\delta)$ be a coaction.
A $\delta$-invariant ideal of $A$ is \emph{small} if it is contained in $\ker j_A$, and
a quotient $(B,\epsilon)$ of $(A,\delta)$ is \emph{large} if the kernel of the quotient map $A\to B$ is small.
\end{defn}

\begin{obs}
Let $(A,\delta)$ be a coaction, and let $E$ be a large ideal of $B(G)$.
Then $\JJ(E)$ is small.
\end{obs}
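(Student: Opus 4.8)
The plan is to reduce the whole statement to the single distinguished ideal $B_r(G)$ and then invoke the standard normalization theory already recalled in the Example preceding this observation. Note first that by \lemref{invariant} the ideal $\JJ(E)$ is automatically $\delta$-invariant, so the only thing that needs checking for smallness is the containment $\JJ(E)\subset\ker j_A$.

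The first step is to record the (trivial but essential) monotonicity of the assignment $E\mapsto\JJ(E)$: the defining condition ``$f\cdot a=0$ for all $f\in E$'' only becomes more restrictive as $E$ grows, so $\JJ(E_2)\subset\JJ(E_1)$ whenever $E_1\subset E_2$. Since $E$ is large, it contains $B_r(G)$, and hence $\JJ(E)\subset\JJ(B_r(G))$. Thus it suffices to prove the special case $E=B_r(G)$, i.e. that $\JJ(B_r(G))\subset\ker j_A$.

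The crux is therefore the identification of $\JJ(B_r(G))$ with $\ker j_A$. The Example above records that the normalization $(A^n,\delta^n)$ is $B_r(G)$-determined from $(A,\delta)$, which by \defnref{E-determined} means precisely that the normalization map $q^n$ satisfies $\ker q^n=\JJ(B_r(G))$. On the other side, standard coaction theory realizes the normalization $A^n$ as the image $j_A(A)\subset M(A\rtimes_\delta G)$, with $q^n$ being nothing but $j_A$ corestricted to its range; in particular $\ker q^n=\ker j_A$. Combining these two identities gives $\JJ(B_r(G))=\ker j_A$, and together with the monotonicity of the previous paragraph this yields $\JJ(E)\subset\JJ(B_r(G))=\ker j_A$, so $\JJ(E)$ is small.

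There is no genuine obstacle here; the only point that needs care rather than ingenuity is the identification $\ker q^n=\ker j_A$, which describes the normalization $q^n$ concretely as $j_A$ onto its image. I would cite this from the standard references on normalization of coactions (e.g. \cite{ekq} or \cite{boiler}) rather than reprove it, since everything else follows immediately from the monotonicity of $\JJ$ and the definition of a large ideal.
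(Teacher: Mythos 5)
Your proof is correct, and it is essentially the argument the paper intends: the paper states this as an unproved observation immediately after the Example recording that the normalization is $B_r(G)$-determined, so the intended reasoning is precisely your combination of the monotonicity of $E\mapsto\JJ(E)$, the containment $B_r(G)\subset E$ from largeness, and the standard identification $\JJ(B_r(G))=\ker q^n=\ker j_A$. Your citation of the normalization literature for $\ker q^n=\ker j_A$ is appropriate, since that fact is exactly what the paper's surrounding text (and its later use of $j_A(A)$ with the inner coaction $\ad j_G$ in the proof of \lemref{iso}) takes as standard.
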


\begin{rem}
Note that every coaction $(A,\delta)$ is a large quotient of its maximalization $(A^m,\delta^m)$.
Also, the small ideals of $C^*(G)$ are precisely the preannihilators of the large ideals of $B(G)$.
\end{rem}

\section{$E$-crossed product duality}\label{duality}

Let $(A,\delta)$ be a coaction, and let
\[
\Phi:A\rtimes_\delta G\rtimes_{\what\delta} G\to A\otimes \KK
\]
be the canonical surjection, where $\KK=\KK(L^2(G))$.

\begin{lem}\label{K}
The ideal $\ker\Phi$ is 
small.
\end{lem}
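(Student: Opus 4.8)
The plan is to verify that $\ker\Phi$ satisfies the two requirements for an ideal of the double-dual coaction $(C,\what{\what\delta})$, with $C:=A\rtimes_\delta G\rtimes_{\what\delta}G$, to be small: that it is $\what{\what\delta}$-invariant, and that it is contained in $\ker j$, where $j:C\to M(C\rtimes_{\what{\what\delta}}G)$ is the canonical map of the coaction $\what{\what\delta}$. First I would reduce the second requirement to the containment $\ker\Phi\subset\ker\Lambda$. Since $\what{\what\delta}$ is the dual coaction on the \emph{full} crossed product $(A\rtimes_\delta G)\rtimes_{\what\delta}G$ of the action $\what\delta$, its normalization is the dual coaction on the reduced crossed product, realized by the regular representation $\Lambda$; and because the kernel of the canonical map $j$ always coincides with the kernel of the normalization, we get $\ker j=\ker\Lambda$.

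For the invariance I would appeal to \remref{double dual}, which gives that $\Phi$ is $\what{\what\delta}-\gamma$ equivariant for the coaction $\gamma=\ad(1\otimes W^*)\circ(\delta\otimes_*\id)$ on $A\otimes\KK$. Equivariance makes the kernel invariant directly: for $a\in\ker\Phi$ we have $(\Phi\otimes\id)\circ\what{\what\delta}(a)=\gamma(\Phi(a))=0$, so $\ker\Phi\subset\ker(\Phi\otimes\id)\circ\what{\what\delta}$, which, after composing with the isomorphism $C/\ker\Phi\cong A\otimes\KK$, is exactly the statement that $\ker\Phi$ is $\what{\what\delta}$-invariant (this is the general fact, recorded before \lemref{morita}, that the kernel of an equivariant surjection is invariant).

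The substantive step is the containment $\ker\Phi\subset\ker\Lambda$, which I would obtain from the naturality of the canonical surjection. Writing $q^n:A\to A^n$ for the normalization of $\delta$ and using $A\rtimes_\delta G=A^n\rtimes_{\delta^n}G$, Katayama duality for the normal coaction $\delta^n$ supplies an isomorphism $\Phi_r:(A\rtimes_\delta G)\rtimes_{\what\delta,r}G\iso A^n\otimes\KK$, and naturality places $\Phi$, $\Lambda$, $q^n\otimes\id$, and $\Phi_r$ in a commuting square. Chasing $a\in\ker\Phi$ around it gives $\Phi_r(\Lambda(a))=(q^n\otimes\id)(\Phi(a))=0$, so $\Lambda(a)=0$ by injectivity of $\Phi_r$. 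I expect this commuting square --- equivalently, the naturality of $\Phi$ with respect to the normalization morphism $q^n$ --- to be the main obstacle; the identification $\ker j=\ker\Lambda$ is standard coaction theory but must be invoked with care. With both requirements in hand, $\ker\Phi$ is $\what{\what\delta}$-invariant and contained in $\ker j$, hence small.
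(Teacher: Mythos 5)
Your proof is correct, and while it shares the paper's two-part skeleton --- invariance of $\ker\Phi$ under $\what{\what\delta}$ plus the containment $\ker\Phi\subset\ker\Lambda=\ker j$ --- both halves are carried out differently. For invariance, the paper works with the coaction $\wilde\delta$ of \cite{ekq} on $A\rtimes_\delta G\rtimes_{\what\delta}G$: equivariance of $\Phi$ gives $\wilde\delta$-invariance of $\ker\Phi$, and since $\wilde\delta$ is exterior equivalent, hence Morita equivalent (\propref{exterior}), to $\what{\what\delta}$, the invariance is transferred via \lemref{morita}. You instead push the cocycle perturbation to the $A\otimes\KK$ side, quoting \remref{double dual} to get equivariance of $\Phi$ for $\what{\what\delta}$ itself, which makes the kernel invariant immediately and bypasses \lemref{morita} and the Morita machinery altogether --- a genuine economy, though the underlying cocycle computation from \cite{ekq} is the same in both formulations. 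For the containment, the paper simply cites \cite[Proposition~2.2]{ekq}, which provides a surjection $\Psi:A\otimes\KK\to A\rtimes_\delta G\rtimes_{\what\delta,r}G$ with $\Psi\circ\Phi=\Lambda$; your commuting square involving $\Phi_r$ and $q^n\otimes\id$ is exactly that proposition in disguise (take $\Psi=\Phi_r\inv\circ(q^n\otimes\id)$), and your re-derivation of it is sound: the naturality you flag as the main obstacle is the same ``functoriality of the constructions'' the paper itself invokes in the proof of \thmref{equivalent}, applied with $q^n$ in place of $\psi$ there (note that $q^n\times G\times G$ is an isomorphism by \lemref{iso}, since $\ker q^n=\ker j_A$ is small), combined with naturality of the regular representation and Katayama duality for the normal coaction $\delta^n$. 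Your identification $\ker j=\ker\Lambda$ is likewise the standard fact the paper uses when it asserts that $A\rtimes_\delta G\rtimes_{\what\delta,r}G$ is the normalization of $A\rtimes_\delta G\rtimes_{\what\delta}G$. In short: what the paper buys by citation, you buy with a naturality chase; what the paper handles by Morita equivalence, you avoid by choosing the other half of the exterior-equivalence dictionary.
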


\begin{proof}
By \cite[Lemmas~3.6 and 3.8]{ekq} the surjection $\Phi$ is 
equivariant for two coactions, where the coaction on
$A\rtimes_\delta G\rtimes_{\what\delta} G$, denoted by $\wilde\delta$ in \cite{ekq},
is exterior equivalent, and hence Morita equivalent, to the double-dual coaction $\what{\what\delta}$.
Since $\Phi$ transports $\wilde\delta$ to \emph{some} coaction on $A\otimes\KK$, by \cite[Lemma~3.11]{graded} the ideal $\ker\Phi$ is $\widetilde\delta$-invariant.
So, by \lemref{morita}, $\ker\Phi$ is also $\what{\what\delta}$-invariant.

For the other part, 
by \cite[Proposition~2.2]{ekq} there is a surjection $\Psi$ making the diagram
\[
\xymatrix{
A\rtimes_\delta G\rtimes_{\what\delta} G \ar[r]^-\Phi \ar[d]_\Lambda
&A\otimes \KK(L^2(G)) \ar[dl]^\Psi
\\
A\rtimes_\delta G\rtimes_{\what\delta,r} G
}
\]
commute, 
where
\[
\Lambda=\Lambda_\delta:A\rtimes_\delta G\rtimes_{\what\delta} G\to A\rtimes_\delta G\rtimes_{\what\delta,r} G
\]
is the regular representation.
Thus
$\ker\Phi$ is small,
since $A\rtimes_\delta G\rtimes_{\what\delta,r} G$ is the normalization of
$A\rtimes_\delta G\rtimes_{\what\delta} G$.
\end{proof}

\begin{ex}
By \lemref{K}, the extremes for the ideal $\ker\Phi$ are:
\begin{enumerate}
\item $\delta$ is maximal if and only if $\ker\Phi=\{0\}$.
\item $\delta$ is normal if and only if $\ker\Phi=\ker\Lambda$.
\end{enumerate}
\end{ex}

\begin{defn}
A coaction $(A,\delta)$ satisfies \emph{$E$-crossed product duality} if
\[
\ker \Phi=\JJ_{\what{\what\delta}}(E).
\]
\end{defn}

\begin{rem}
This is called ``$E$-duality'' in \cite{BusEch}.
\end{rem}

Thus, $(A,\delta)$ satisfies $E$-crossed product duality exactly when there is an isomorphism $\Psi$ making the diagram
\[
\xymatrix{
A\rtimes G\rtimes G \ar[r]^-{\Phi} \ar[d]_{Q_{\what{\what\delta},E}}
&A\otimes\KK
\\
(A\rtimes G\rtimes G)^E \ar@{-->}[ur]_\Psi^\cong
}
\]
commute, where
\[
(A\rtimes G\rtimes G)^E=(A\rtimes G\rtimes G)/\JJ_{\what{\what\delta}}(E)
\]
and $Q_{\what{\what\delta},E}$ is the quotient map.

\begin{ex}
$(A,\delta)$ is maximal if and only if it satisfies $B(G)$-crossed product duality, and 
normal if and only if it satisfies $B_r(G)$-crossed product duality.
\end{ex}

Now, $(A,\delta)$ is a large quotient of 
its maximalization $(A^m,\delta^m)$; let $\psi:A^m\to A$ be the associated $\delta^m-\delta$ equivariant surjection.
Recall that, if $E$ is a large ideal of $B(G)$,
we say that $(A,\delta)$ is $E$-determined from its maximalization if
$\ker\psi=\JJ_{\delta^m}(E)$.

The following theorem shows that the above two properties on $(A,\delta)$ are equivalent.
In the final stage of writing this paper we learned of the paper \cite{BusEch} by Buss and Echterhoff, and their Theorem~5.1 gives a proof of the converse direction using significantly different techniques.

\begin{thm}\label{equivalent}
$(A,\delta)$ satisfies $E$-crossed product duality if and only if it is $E$-determined from its maximalization.
\end{thm}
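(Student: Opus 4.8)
The plan is to prove both implications at once by transporting everything through an equivariant isomorphism onto $A^m\otimes\KK$, where the two conditions become visibly the same ideal equality. The starting point is that since $\delta^m$ is maximal, the canonical surjection $\Phi_{\delta^m}:A^m\rtimes_{\delta^m}G\rtimes_{\what{\delta^m}}G\to A^m\otimes\KK$ is an \emph{isomorphism}, and by \remref{double dual} (applied to $\delta^m$) it is equivariant for $\what{\what{\delta^m}}$ and the cocycle-twisted coaction $\eta^m:=\ad(1\otimes W^*)\circ(\delta^m\otimes_*\id)$. At the same time the maximalization surjection $\psi:A^m\to A$ is $\delta^m$–$\delta$ equivariant, and because maximalization induces an isomorphism of crossed products, the doubly iterated map $\psi_{**}:=\psi\rtimes G\rtimes G:A^m\rtimes G\rtimes G\to A\rtimes G\rtimes G$ is an isomorphism that is equivariant for $\what{\what{\delta^m}}$ and $\what{\what\delta}$. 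Setting $\Xi:=\Phi_{\delta^m}\circ\psi_{**}^{-1}$ therefore yields an equivariant isomorphism $\Xi:(A\rtimes G\rtimes G,\what{\what\delta})\to(A^m\otimes\KK,\eta^m)$.

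The first main step is to use naturality of the canonical surjection with respect to $\psi$: the square with horizontal maps $\Phi_{\delta^m},\Phi_\delta$ and vertical maps $\psi_{**},\psi\otimes\id$ commutes, giving $\Phi_\delta=(\psi\otimes\id)\circ\Xi$. Since $\Xi$ is an isomorphism, this immediately forces $\ker\Phi_\delta=\Xi^{-1}\bigl(\ker(\psi\otimes\id)\bigr)=\Xi^{-1}(\ker\psi\otimes\KK)$, the last equality holding because $\KK$ is nuclear, so $0\to\ker\psi\otimes\KK\to A^m\otimes\KK\to A\otimes\KK\to0$ is exact.

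The second main step is to compute $\Xi\bigl(\JJ_{\what{\what\delta}}(E)\bigr)$. As $\Xi$ is an equivariant isomorphism it intertwines the $B(G)$-module actions, and hence carries $\JJ_{\what{\what\delta}}(E)$ onto $\JJ_{\eta^m}(E)$ directly from \defnref{J}. Here I will invoke two elementary facts, each deduced from the identity $\JJ_\gamma(E)=\ker(\id\otimes q_E)\circ\gamma$ of the theorem in \secref{once}. First, exterior equivalent coactions have the same $\JJ(E)$: conjugating $\gamma$ by a cocycle $U$ merely conjugates $(\id\otimes q_E)\circ\gamma$ by the unitary $(\id\otimes q_E)(U)$, which does not change the kernel; thus $\JJ_{\eta^m}(E)=\JJ_{\delta^m\otimes_*\id}(E)$. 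Second, for the coaction $\delta^m\otimes_*\id$, which acts trivially on the $\KK$-leg, one has $\JJ_{\delta^m\otimes_*\id}(E)=\JJ_{\delta^m}(E)\otimes\KK$, since $(\id\otimes q_E)\circ(\delta^m\otimes_*\id)$ is, up to a leg flip, $\bigl((\id\otimes q_E)\circ\delta^m\bigr)\otimes\id_\KK$, whose kernel is $\JJ_{\delta^m}(E)\otimes\KK$ by nuclearity of $\KK$. Combining gives $\Xi\bigl(\JJ_{\what{\what\delta}}(E)\bigr)=\JJ_{\delta^m}(E)\otimes\KK$.

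It then remains only to assemble the equivalence. Applying $\Xi$ to both sides, the duality condition $\ker\Phi_\delta=\JJ_{\what{\what\delta}}(E)$ becomes $\ker\psi\otimes\KK=\JJ_{\delta^m}(E)\otimes\KK$, and since $I\mapsto I\otimes\KK$ is injective on ideals, this holds if and only if $\ker\psi=\JJ_{\delta^m}(E)$, i.e.\ if and only if $(A,\delta)$ is $E$-determined from its maximalization; note that this chain of equivalences delivers both implications at once. I expect the genuine obstacle to lie in the first step, namely verifying carefully that $\psi_{**}$ is an isomorphism equivariant for the double-dual coactions and that the canonical-surjection naturality square commutes, rather than in the $\JJ(E)$-computations, which become formal once the identity $\JJ_\gamma(E)=\ker(\id\otimes q_E)\circ\gamma$ is available.
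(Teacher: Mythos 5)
Your proposal is correct and takes essentially the same route as the paper's proof: the isomorphism $\Phi_m\circ(\psi\times G\times G)^{-1}$ (your $\Xi$), the naturality square $\Phi=(\psi\otimes\id)\circ\Xi$, transporting $\JJ_{\what{\what\delta}}(E)$ by equivariance, the cocycle lemma $\JJ_{\ad U\circ\gamma}(E)=\JJ_\gamma(E)$ via conjugation by the unitary $(\id\otimes q_E)(U)$, the leg-flip computation $\JJ_{\delta^m\otimes_*\id}(E)=\JJ_{\delta^m}(E)\otimes\KK$ using exactness of $\KK$, and the injectivity of $I\mapsto I\otimes\KK$ are all exactly the steps the paper uses. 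The only cosmetic difference is that you bundle the two maps into a single named isomorphism and push both ideals through it at once, rather than computing the image of each side separately as the paper does.
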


\begin{proof}
We must show that
\[
\ker\psi=\JJ_{\delta^m}(E)
\]
if and only if
\[
\ker\Phi=\JJ_{\what{\what\delta}}(E).
\]

Since $(A^m,\delta^m)$ is maximal, the canonical surjection
\[
\Phi_m:A^m\rtimes G\rtimes G\to A^m\otimes\KK
\]
is an isomorphism.
Since $(A,\delta)$ is a large quotient of $(A^m,\delta^m)$, the double crossed product map
\[
\psi\times G\times G:A^m\rtimes G\rtimes G\to A\rtimes G\rtimes G
\]
is an isomorphism, by the folklore \lemref{iso} below.
By functoriality of the constructions, the diagram
\[
\xymatrix{
A^m\rtimes G\rtimes G \ar[r]^-{\Phi_m}_-\cong \ar[d]_{\psi\times G\times G}^\cong
&A^m\otimes\KK \ar[d]^{\psi\otimes\id}
\\
A\rtimes G\rtimes G \ar[r]_{\Phi}
&A\otimes\KK
}
\]
commutes.
Thus,
\[
\Phi_m\circ (\psi\times G\times G)\inv(\ker\Phi)=\ker\psi\otimes\KK.
\]

Our strategy is to show that
\begin{equation}\label{JJ}
\Phi_m\circ (\psi\times G\times G)\inv(\JJ_{\what{\what\delta}}(E))=\JJ_{\delta^m}(E)\otimes\KK.
\end{equation}
Since $\Phi_m\circ (\psi\times G\times G)\inv$ is an isomorphism,
and for ideals $I,J$ of $A^m$ we have $I\otimes\KK=J\otimes\KK$ if and only if $I=J$,
this will suffice.
Since $\psi\times G\times G$ is a $\what{\what{\delta^m}}-\what{\what\delta}$ equivariant isomorphism,
\[
\psi\times G\times G(\JJ_{\what{\what{\delta^m}}}(E))=\JJ_{\what{\what\delta}}(E).
\]
Thus, it suffices to show
\begin{equation}
\Phi_m(\JJ_{\what{\what{\delta^m}}}(E))=\JJ_{\delta^m}(E)\otimes\KK.
\label{Phi E}
\end{equation}
Here are the steps:
\begin{align}
\Phi_m(\JJ_{\what{\what{\delta^m}}}(E))
&=\JJ_{\ad (1\otimes W^*)\circ(\delta^m\otimes_*\id)}(E)\label{doubledual}
\\&=\JJ_{\delta^m\otimes_*\id}(E)\label{cocycle}
\\&=\JJ_{\delta^m}(E)\otimes\KK.\label{tensor}
\end{align}
\eqref{doubledual} follows from $\what{\what{\delta^m}}-\ad(1\otimes W^*)\circ(\delta^m\otimes_*\id)$ equivariance of $\Phi_m$,
\eqref{cocycle} follows because $1\otimes W^*$ is a $\delta^m\otimes_*\id$-cocycle 
(as in \remref{double dual})
---
see the elementary \lemref{abstract cocycle} below ---
and
\eqref{tensor} follows from a routine computation with tensor products:
\begin{align*}
\JJ_{\delta^m\otimes_*\id}(E)
&=\ker \bigl((\id\otimes\id\otimes q_E)\circ (\delta^m\otimes_*\id)\bigr)
\\&=\ker \bigl((\id\otimes\id\otimes q_E)\circ (\id\otimes\Sigma)\circ (\delta^m\otimes\id)\bigr)
\\&=\ker \bigl((\id\otimes\Sigma)\circ (\id\otimes q_E\otimes\id)\circ (\delta^m\otimes\id)\bigr)
\\&=\ker \bigl((\id\otimes q_E\otimes\id)\circ (\delta^m\otimes\id)\bigr)
\quad\text{(since $\id\otimes\Sigma$ is injective)}
\\&=\ker \Bigl(\bigl((\id\otimes q_E)\circ\delta^m\bigr)\otimes\id\Bigr)
\\&=\ker \bigl((\id\otimes q_E)\circ\delta^m\bigr)\otimes\KK
\quad\text{(since $\KK$ is exact)}
\\&=\JJ_{\delta^m}(E)\otimes\KK.
\qedhere
\end{align*}
\end{proof}

In the above proof we invoked the following two general lemmas.
The first relies upon the fact that the normalization map $A\to A^n$ gives isomorphic crossed products $A\rtimes_\delta G\cong A^n\rtimes_{\delta^n} G$,
while the second shows that exterior equivalent coactions have the same $\JJ$ map from large ideals of $B(G)$ to small ideals of $A$.

\begin{lem}\label{iso}
Let $(A,\delta)$ be a coaction,
let $J$ be an invariant ideal,
let $Q:A\to A/J$ be the quotient map,
and let $\delta_J$ be the associated coaction on $A/J$.
Then $J$ is small if and only if the crossed-product homomorphism
\[
Q\times G:A\rtimes_\delta G\to A/J\rtimes_{\delta_J} G
\]
is an isomorphism.
\end{lem}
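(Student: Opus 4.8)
The plan is to treat the two implications separately, loading essentially all of the work onto the advertised normalization isomorphism. The easy (``only if'') direction goes first: assuming $Q\times G$ is an isomorphism, I would take an arbitrary $a\in J$ and use the defining intertwining relation $(Q\times G)\circ j_A=j_{A/J}\circ Q$ to compute
\[
(Q\times G)(j_A(a))=j_{A/J}(Q(a))=j_{A/J}(0)=0.
\]
Injectivity of $Q\times G$ then forces $j_A(a)=0$, so $a\in\ker j_A$; as $a$ was arbitrary, $J\subset\ker j_A$, i.e., $J$ is small.

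For the ``if'' direction, suppose $J$ is small and set $N=\ker j_A$, so that the normalization is $(A^n,\delta^n)=(A/N,\delta_N)$ with normalization map $R:A\to A^n$. Since $J\subset N$, the map $R$ factors through $Q$ as $R=\bar Q\circ Q$ for an induced quotient map $\bar Q:A/J\to A^n$; a short diagram chase, using surjectivity of $Q$ together with the equivariance of both $R$ and $Q$, shows that $\bar Q$ is $\delta_J-\delta^n$ equivariant. Functoriality of the crossed-product construction then gives
\[
R\times G=(\bar Q\times G)\circ(Q\times G).
\]

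Now I would invoke the key fact that the normalization map induces an isomorphism $R\times G:A\rtimes_\delta G\iso A^n\rtimes_{\delta^n}G$, which is standard coaction theory (see, e.g., \cite{BE}). Since the displayed composite is in particular injective, $Q\times G$ must be injective as well. On the other hand $Q\times G$ is automatically surjective: its range is a $C^*$-subalgebra containing both $(Q\times G)(j_A(A))=j_{A/J}(A/J)$ (using surjectivity of $Q$) and $(Q\times G)(j_G(C_0(G)))=j_G(C_0(G))$, and these generate $A/J\rtimes_{\delta_J}G$. Hence $Q\times G$ is an isomorphism.

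I do not expect a genuine obstacle: the entire content is packaged into the normalization isomorphism $A\rtimes_\delta G\cong A^n\rtimes_{\delta^n}G$, so once the factorization $R=\bar Q\circ Q$ is in place the argument is purely formal. The only points needing a moment's care are the equivariance of $\bar Q$ (so that $\bar Q\times G$ is defined and functoriality applies) and keeping straight the two systems of canonical maps into the two crossed products; both are routine.
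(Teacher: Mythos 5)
Your proof is correct and takes essentially the same approach as the paper's: the ``only if'' direction is verbatim the paper's (apply $(Q\times G)\circ j_A=j_{A/J}\circ Q$ and injectivity), and your ``if'' direction is the paper's argument in different clothing. The paper realizes the normalization concretely as $j_A(A)$ with the inner coaction $\ad j_G$, factors $j_A=\zeta\circ Q$ through $A/J$, and deduces injectivity of $Q\times G$ from injectivity of $j_A\times G=(\zeta\times G)\circ(Q\times G)$ --- which is exactly the normalization crossed-product isomorphism $A\rtimes_\delta G\cong A^n\rtimes_{\delta^n}G$ that you invoke as a black box (and which the paper itself notes the lemma relies on).
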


\begin{proof}
$Q\times G$ is always a surjection, so the issue is whether it is injective.
First suppose $J$ is small.
Then there is a unique surjection $\zeta$ making the diagram
\[
\xymatrix{
A \ar[r]^-Q \ar[dr]_{j_A}
&A/J \ar@{-->}[d]^\zeta
\\
&j_A(A)
}
\]
commute, and moreover $\zeta$ is $\delta_J-\ad j_G$ equivariant,
where $\ad j_G$ is the inner coaction on $j_A(A)$ implemented by the canonical homomorphism $j_G:C_0(G)\to M(A\times_\delta G)$.
Thus we have
\[
j_A\times G=(\zeta\times G)\circ (Q\times G),
\]
which is injective, and hence $Q\times G$ is injective.

For the other direction, note that
\[
(Q\times G)\circ j_A=j_{A/J}\circ Q,
\]
so, assuming $Q\times G$ is injective, we have
\[
J=\ker Q\subset \ker j_A.
\qedhere
\]
\end{proof}

\begin{lem}\label{abstract cocycle}
Let $(A,\delta)$ be a coaction, $U$ a $\delta$-cocycle, and $E$ a large ideal of $B(G)$.
Then
\[
\JJ_{\delta}(E)=\JJ_{\ad U\circ\delta}(E).
\]
\end{lem}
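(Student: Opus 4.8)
The plan is to reduce everything to the identification, established earlier in this section, that $\JJ_\delta(E)=\ker(\id\otimes q_E)\circ\delta$ for every weak* closed $G$-invariant subspace $E$. Since $\ad U\circ\delta$ is itself a coaction (by \propref{exterior} and \remref{exterior nondegenerate}), and a large ideal $E$ is in particular weak* closed and $G$-invariant, the same identification applies to it, giving $\JJ_{\ad U\circ\delta}(E)=\ker(\id\otimes q_E)\circ(\ad U\circ\delta)$. Thus it suffices to prove that the two homomorphisms $(\id\otimes q_E)\circ\delta$ and $(\id\otimes q_E)\circ(\ad U\circ\delta)$ from $A$ into $A\otimes C^*_E(G)$ have the same kernel.

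First I would observe that $\id\otimes q_E:A\otimes C^*(G)\to A\otimes C^*_E(G)$ is surjective, hence nondegenerate, so it extends uniquely to a strictly continuous unital $*$-homomorphism on the multiplier algebras. Setting $V=(\id\otimes q_E)(U)$, this extension carries the unitary $U\in M(A\otimes C^*(G))$ to a unitary $V\in M(A\otimes C^*_E(G))$. Applying the (multiplier extension of the) homomorphism $\id\otimes q_E$ to the identity $\ad U\circ\delta(a)=U\delta(a)U^*$ then yields
\[
(\id\otimes q_E)\circ(\ad U\circ\delta)(a)=V\,\bigl((\id\otimes q_E)\circ\delta(a)\bigr)\,V^*
\]
for all $a\in A$; that is, $(\id\otimes q_E)\circ(\ad U\circ\delta)=\ad V\circ\bigl((\id\otimes q_E)\circ\delta\bigr)$.

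Since $V$ is a unitary, $\ad V$ is an automorphism of $A\otimes C^*_E(G)$, and in particular injective, so conjugation by $V$ does not change the kernel. Hence
\[
\ker\bigl((\id\otimes q_E)\circ(\ad U\circ\delta)\bigr)=\ker\bigl((\id\otimes q_E)\circ\delta\bigr),
\]
which is exactly the desired equality $\JJ_{\ad U\circ\delta}(E)=\JJ_\delta(E)$. The only point requiring any care is the passage to the multiplier algebras: one must note that $q_E$, being a quotient map, is nondegenerate, so that $\id\otimes q_E$ extends to multipliers and sends the unitary $U$ to a genuine unitary $V$. Everything else is formal, so I do not expect a serious obstacle; the lemma is in essence the observation that the $\JJ$-construction depends only on $\delta$ up to conjugation by a cocycle, and that at the level of $(\id\otimes q_E)\circ\delta$ this conjugation is visibly implemented by $V$.
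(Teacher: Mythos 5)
Your proof is correct and is essentially identical to the paper's own argument: the paper likewise writes $\JJ_{\ad U\circ\delta}(E)=\ker\bigl(\ad(\id\otimes q_E)(U)\bigr)\circ(\id\otimes q_E)\circ\delta$ and cancels the conjugation because $(\id\otimes q_E)(U)$ is unitary. Your added care about extending $\id\otimes q_E$ to multipliers is a sound (if routine) elaboration of the same computation.
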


\begin{proof}
We have
\begin{align*}
\JJ_{\ad U\circ\delta}(E)
&=\ker (\id\otimes q_E)\circ\ad U\circ\delta
\\&=\ker \bigl(\ad (\id\otimes q_E)(U)\bigr)\circ (\id\otimes q_E)\circ\delta
\\&=\ker (\id\otimes q_E)\circ\delta
\quad\text{(since $(\id\otimes q_E)(U)$ is unitary)}
\\&=\JJ_{\delta}(E).
\qedhere
\end{align*}
\end{proof}

We can now settle \cite[Conjecture~6.14]{graded} affirmatively (again, see \cite[Theorem~5.1]{BusEch} for an alternative proof):

\begin{cor}
For any large ideal $E$ of $B(G)$,
the coaction $(C^*_E(G),\delta_G^E)$ satisfies $E$-crossed product duality,
and more generally so does the dual coaction of $G$ on an $E$-crossed product
$B\rtimes_{\alpha,E} G$ for any action $(B,G,\alpha)$.
\end{cor}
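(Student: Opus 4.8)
The plan is to reduce both assertions to \thmref{equivalent}, which says that a coaction satisfies $E$-crossed product duality exactly when it is $E$-determined from its maximalization. The two statements are really one: the canonical coaction is the instance $B=\C$ with trivial action, where $\C\rtimes G=C^*(G)$, the dual coaction is $\delta_G$, and (as I verify below) $C^*_E(G)=(C^*(G))^E$. So it suffices to treat a maximal coaction $(A,\delta)$ of the form $(B\rtimes_\alpha G,\what\alpha)$ and show that its $E$-determined quotient $(A^E,\delta^E)$ --- which is by definition $B\rtimes_{\alpha,E}G$ with its quotient coaction --- satisfies $E$-crossed product duality.

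First I would record the two facts that put us inside the framework of \thmref{equivalent}. By standard coaction theory, a dual coaction on a full crossed product is maximal, so $(A,\delta)=(B\rtimes_\alpha G,\what\alpha)$ is maximal; in particular $(C^*(G),\delta_G)$ is maximal. Next, for the canonical coaction I would identify $(C^*(G))^E$ with $C^*_E(G)$ by checking that $\JJ_{\delta_G}(E)=\pann E$. This is a short pairing computation: the module action is $f\cdot a=(\id\otimes f)\circ\delta_G(a)$, and since the product on $B(G)$ is defined by $(g\otimes f)\circ\delta_G=gf$, we get $g(f\cdot a)=(gf)(a)$ for every $g\in B(G)$. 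Taking $g=1$, the unit of $B(G)$ (the trivial representation), gives $f(a)=0$ whenever $f\cdot a=0$, so $\JJ_{\delta_G}(E)\subset\pann E$; the reverse inclusion holds because $gf\in E$ for all $g\in B(G)$ when $E$ is an ideal. Hence $(C^*(G))^E=C^*_E(G)$ and $\delta_G^E$ is the quotient coaction.

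With these identifications in hand, the core of the argument is to check that $(A,\delta)$ is the maximalization of $(A^E,\delta^E)$. Let $Q\colon A\to A^E$ be the quotient map, so $\ker Q=\JJ_\delta(E)$. By the Observation above, $\JJ_\delta(E)$ is small, and hence by \lemref{iso} the crossed-product homomorphism $Q\rtimes G$ is an isomorphism; that is, $(A^E,\delta^E)$ is a large quotient of $(A,\delta)$. Since $(A,\delta)$ is maximal and $Q$ induces an isomorphism on crossed products, the equivariant surjection $Q\colon(A,\delta)\to(A^E,\delta^E)$ is a maximalization of $(A^E,\delta^E)$, and by uniqueness of maximalizations we may take $(A^E)^m=A$ with maximalization map $\psi=Q$. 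Then $\ker\psi=\ker Q=\JJ_\delta(E)$, so $(A^E,\delta^E)$ is $E$-determined from its maximalization, and \thmref{equivalent} yields $E$-crossed product duality. Specializing to $(A,\delta)=(C^*(G),\delta_G)$ gives the first assertion and to $(A,\delta)=(B\rtimes_\alpha G,\what\alpha)$ the second.

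I expect the main obstacle to be the step identifying the maximalization of $(A^E,\delta^E)$: everything hinges on knowing that passing to the $E$-quotient does not enlarge the maximalization, which is exactly the smallness of $\JJ_\delta(E)$ together with the maximality of $(A,\delta)$. Once that is pinned down --- and once $\JJ_{\delta_G}(E)=\pann E$ is verified so that the canonical case literally fits the framework --- the conclusion is immediate from \thmref{equivalent}. The pairing computation and the appeal to \lemref{iso} are routine.
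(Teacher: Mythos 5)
Your proposal is correct and follows the paper's own route: the paper states this result without a separate proof, as an immediate consequence of \thmref{equivalent}, and your argument supplies exactly the facts being invoked --- maximality of dual coactions on full crossed products, the identification $\JJ_{\delta_G}(E)=\pann E$ (so $(C^*(G))^E=C^*_E(G)$), and smallness of $\JJ_\delta(E)$ together with \lemref{iso} to recognize $(B\rtimes_\alpha G,\what\alpha)$ as the maximalization of the $E$-crossed product. In particular, your key step that a maximal coaction surjecting onto its $E$-quotient with isomorphic crossed products \emph{is} that quotient's maximalization is precisely how the corollary is meant to follow.
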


\section{Slice proper coactions}\label{properness}

\begin{defn}\label{proper def}
A coaction $(A,\delta)$ is 
\emph{proper} if
\begin{equation}\label{proper coaction}
(A\otimes 1)\delta(A)\subset A\otimes C^*(G),
\end{equation}
and 
\emph{slice proper} if
\begin{equation}\label{slice proper coaction}
(\omega\otimes\id)\circ\delta(A)\subset C^*(G)\righttext{for all}\omega\in A^*.
\end{equation}
\end{defn}

Note that 
proper coactions are always slice proper, 
since by the Cohen-Hewitt factorization theorem every functional in $A^*$ can be expressed in the form $\omega\cdot a$, where
\[
\omega\cdot a(b)=\omega(ab)\righttext{for}\omega\in A^*\text{ and }a,b\in A.
\]
On the other hand, elementary examples
show that a coaction can be slice proper without being proper.

Just as every action of a compact group is proper (in the classical sense), every coaction of a discrete group is proper, because then we in fact have $\delta(A)\subset A\otimes C^*(G)$.
In this paper we will only require the weaker notion of slice properness.  
We intend to study proper coactions more thoroughly in upcoming work.

%

Our primary interest in slice proper coactions is the following weak* continuity property:

\begin{lem}\label{weak*}
A coaction
$(A,\delta)$ is slice proper if and only if for all $a\in A$ the map $f\mapsto f\cdot a$ is continuous from the weak* topology of $B(G)$ to the weak topology of $A$.
\end{lem}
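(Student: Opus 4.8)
The plan is to reduce the statement to the standard Banach-space fact that a linear functional on a dual space $X^*$ is weak*-continuous precisely when it is given by evaluation at an element of $X$, applied to $X=C^*(G)$, and to transport this across the Fubini-type identity linking the two slice actions $f\cdot a=(\id\otimes f)\circ\delta(a)$ and $(\omega\otimes\id)\circ\delta$. Recall that $B(G)=C^*(G)^*$, so the weak* topology on $B(G)$ is $\sigma(B(G),C^*(G))$, and that the weak topology on $A$ is $\sigma(A,A^*)$.

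First I would record the key identity: for $a\in A$, $\omega\in A^*$, and $f\in B(G)$,
\[
\omega(f\cdot a)=\omega\bigl((\id\otimes f)\circ\delta(a)\bigr)=(\omega\otimes f)\bigl(\delta(a)\bigr)=f\bigl((\omega\otimes\id)\circ\delta(a)\bigr),
\]
where the outer equalities are the definitions of the two slice maps and the middle equality is Fubini for slicing the multiplier $\delta(a)\in M(A\otimes C^*(G))$ by the product functional $\omega\otimes f$. A priori the slice $(\omega\otimes\id)\circ\delta(a)$ lives in $C^*(G)^{**}=B(G)^*$, that is, it is a bounded linear functional on $B(G)$; by \defnref{proper def} slice properness is exactly the assertion that $(\omega\otimes\id)\circ\delta(a)\in C^*(G)$ for every $\omega\in A^*$ and every $a\in A$. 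The one technical point to justify carefully is that the slice maps extend to $M(A\otimes C^*(G))$ so as to keep this identity valid, for which I would appeal to the standard slice-map calculus for coactions.

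Next I would invoke the duality principle: for a Banach space $X$, a linear functional $\phi$ on $X^*$ is $\sigma(X^*,X)$-continuous if and only if $\phi$ is evaluation at some element of $X$. Taking $X=C^*(G)$, this says that an element of $B(G)^*=C^*(G)^{**}$ lies in $C^*(G)$ exactly when the functional it induces on $B(G)$ is weak*-continuous. With the identity above and this principle in hand, both directions follow at once. Fix $a\in A$; the map $f\mapsto f\cdot a$ is $\sigma(B(G),C^*(G))$-to-weak continuous if and only if $f\mapsto\omega(f\cdot a)$ is weak*-continuous on $B(G)$ for every $\omega\in A^*$, since the weak topology on $A$ is the initial topology determined by $A^*$. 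By the Fubini identity, $f\mapsto\omega(f\cdot a)$ is precisely the functional on $B(G)$ induced by $(\omega\otimes\id)\circ\delta(a)\in C^*(G)^{**}$, so by the duality principle it is weak*-continuous if and only if $(\omega\otimes\id)\circ\delta(a)\in C^*(G)$. Quantifying over all $a\in A$ and all $\omega\in A^*$, continuity of every map $f\mapsto f\cdot a$ is equivalent to $(\omega\otimes\id)\circ\delta(A)\subset C^*(G)$ for all $\omega$, which is exactly slice properness.

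I expect the only genuine obstacle to be the bookkeeping at the level of multiplier algebras: confirming that $(\omega\otimes\id)\circ\delta(a)$ is correctly interpreted as an element of $C^*(G)^{**}$ and that the Fubini identity really holds there for arbitrary $\omega\in A^*$ (not merely strictly continuous ones). Once that slice-map calculus is in place, the Banach-space duality step and the shuffling of quantifiers are purely formal.
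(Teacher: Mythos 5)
Your proposal is correct and is essentially the paper's own argument: both proofs rest on the same Fubini identity $\omega(f\cdot a)=f\bigl((\omega\otimes\id)\circ\delta(a)\bigr)$, with the paper running it through weak*-null nets in each direction while you package the converse via the standard duality fact that a weak*-continuous functional on $C^*(G)^*=B(G)$ is evaluation at an element of $C^*(G)$ --- the very fact the paper's net argument uses implicitly. Your care about interpreting $(\omega\otimes\id)\circ\delta(a)$ (which in fact lands in $M(C^*(G))\subset C^*(G)^{**}$ by the slice-map calculus) is a legitimate point that the paper likewise treats as standard.
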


\begin{proof}
First assume that $\delta$ is slice proper.
Let $f_i\to 0$ weak* in $B(G)$.
We must show that $f_i\cdot a\to 0$ weakly in $A$, so let $\omega\in A^*$, and compute:
\begin{align*}
\omega(f_i\cdot a)
&=\omega\bigl((\id\otimes f_i)\circ \delta(a)\bigr)
=f_i\bigl((\omega\otimes\id)\circ \delta(a)\bigr)
\to 0
\end{align*}
because $(\omega\otimes\id)\circ \delta(a)\in C^*(G)$ by hypothesis.

Conversely, if $f\mapsto f\cdot a$ is weak* to weakly continuous
and $f_i\to 0$ weak* in $B(G)$,
then for all $\omega\in A^*$ we have
\[
f_i\bigl((\omega\otimes\id)\circ\delta(a)\bigr)
=\omega(f_i\cdot a)
\to 0,
\]
and so $(\omega\otimes\id)\circ\delta(a)\in C^*(G)$.
\end{proof}

The next result shows that slice properness is preserved by morphisms:

%

\begin{prop}\label{strict morphism}
Let $\phi:A\to M(B)$ be a nondegenerate homomorphism that is equivariant for coactions $\delta$ and $\epsilon$, respectively.
If $\delta$ is 
slice proper, 
then $\epsilon$ is also slice proper.
\end{prop}

\begin{proof}
Let $b\in B$.
We must show that $(\omega\otimes\id)\circ\epsilon(b)\in C^*(G)$ for all $\omega\in B^*$, and it suffices to do it for positive $\omega$.
We have
\[
(\omega\otimes\id)\circ\epsilon(b)\in M(C^*(G)),
\]
so it suffices to show that for every $\psi\in M(C^*(G))^*$ that is in the annihilator of $C^*(G)$ we have
\[
0=\psi\bigl((\omega\otimes\id)\circ\epsilon(b)\bigr)
=(\omega\otimes\psi)\bigl(\epsilon(b)\bigr).
\]
Again, it suffices to do this for positive $\psi$.
Since $\phi$ is nondegenerate we can factor $b=\phi(a^*)c$ with $a\in A$ and $c\in B$.
By the Cauchy-Schwarz inequality for positive functionals on $C^*$-algebras, we have
\begin{align*}
\bigl|(\omega\otimes\psi)\circ\epsilon(b)\bigr|^2
&=\bigl|(\omega\otimes\psi)\circ\epsilon(\phi(a^*)c)\bigr|^2
\\&=\Bigl|(\omega\otimes\psi)\bigl((\phi\otimes\id)\circ\delta(a)^*\epsilon(c)\bigr)\Bigr|^2
\\&\le (\omega\otimes\psi)\bigl((\phi\otimes\id)\circ\delta(a^*a)\bigr)(\omega\otimes\psi)\bigl(\epsilon(c^*c)\bigr)
\\&=\psi\bigl((\phi^*(\omega)\otimes\id)\circ\delta(a^*a)\bigr)(\omega\otimes\psi)\bigl(\epsilon(c^*c)\bigr)
=0
\end{align*}
because
$(\phi^*(\omega)\otimes\id)\circ\delta(a^*a)\in C^*(G)$.
\end{proof}

\section{Counterexamples}\label{counter}

In \cite[Example~5.4]{BusEch}, Buss and Echterhoff give examples of coactions that are not $E$-determined from their maximalizations for any large ideal $E$ of $B(G)$. In \thmref{counterexample} we give related, but different, examples, involving  quotients of not-necessarily maximal coactions.

\begin{defn}
Let $(A,\delta)$ be a slice proper coaction. For any 
small ideal $J$ of $A$ define
\[
\EE(J)=\EE_\delta(J)=\{f\in B(G):(x\cdot f\cdot y)\cdot J=\{0\}\text{ for all }x,y\in G\}.
\]
\end{defn}

\begin{rem}
When $\delta$ is the dual coaction $\what\alpha$ on an action crossed product $B\rtimes_\alpha G$,
we have a simpler definition:
\[
\EE(J)=\{f\in B(G):f\cdot J=\{0\}\},
\]
since the right-hand side is automatically $G$-invariant in this case:
for $x\in G$, $a\in J$, and $f\in B(G)$, if $f\cdot a=0$ then
\begin{align*}
(x\cdot f)\cdot a
&=(\id\otimes x\cdot f)\bigl(\what\alpha(a)\bigr)
\\&=(\id\otimes f)\bigl(\what\alpha(a)(1\otimes x)\bigr)
\\&=(\id\otimes f)\bigl(\what\alpha(a)(i_G(x)\otimes x)\bigr)i_G(x)\inv
\\&=(\id\otimes f)\bigl(\what\alpha(ai_G(x))\bigr)i_G(x)\inv
=0,
\end{align*}
because $J$ is an ideal of $B\rtimes_\alpha G$ and hence is an ideal of $M(B\rtimes_\alpha G)$.
This shows left $G$-invariance, and similarly for right invariance.
Note that we could have shown invariance under slightly weaker hypotheses
on the coaction $(A,\delta)$: 
it suffices to have,
for every $x\in G$,
a unitary element $u_x\in M(A)$ such that $\delta(u_x)=u_x\otimes x$,
or, for another sufficient condition,
when $G$ is discrete
it is enough that
the coaction $(A,\delta)$ be determined by a saturated Fell bundle $\AA\to G$,
i.e., $A$ is the closed span of the fibres $\{A_x\}_{x\in G}$ of the bundle,
$\clspn\{A_xA_x^*\}=A_e$ for all $x\in G$, and
$\delta(a_x)=a_x\otimes x$ for all $a_x\in A_x$.
\end{rem}

\begin{q}
For a slice proper coaction $(A,\delta)$ and a 
small ideal $J$ of $A$,
is the set
\[
\{f\in B(G):f\cdot J=\{0\}\}
\]
$G$-invariant in $B(G)$? Presumably not, but we do not know of a counterexample.
\end{q}

\begin{lem}\label{galois ideal}
For any slice proper coaction $(A,\delta)$,
$\JJ_\delta$ and $\EE_\delta$ form a Galois correspondence
between the large ideals of $B(G)$ and
the 
small ideals of $A$.
\end{lem}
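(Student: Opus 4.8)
The plan is to verify the four requirements packaged into the definition of a Galois correspondence recorded in \secref{prelim}: that $\JJ_\delta$ and $\EE_\delta$ are order-reversing, that each lands in the asserted codomain, and that $E\subseteq\EE_\delta(\JJ_\delta(E))$ and $J\subseteq\JJ_\delta(\EE_\delta(J))$. Order-reversal is immediate from the definitions, since enlarging $E$ imposes more killing conditions on $a$ and hence shrinks $\JJ(E)$, while enlarging $J$ imposes more killing conditions on $f$ and hence shrinks $\EE(J)$. That $\JJ_\delta$ sends a large ideal $E$ to a small ideal is exactly the Observation in \secref{once}. So the real content lies in (a) showing that $\EE_\delta(J)$ is a large ideal whenever $J$ is small, and (b) the two containments.

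The substantive step is (a). First I would record the compatibility between the $B(G)$-module action and the algebra structure of $B(G)$: slicing the coaction identity $(\delta\otimes\id)\circ\delta=(\id\otimes\delta_G)\circ\delta$ gives $(fg)\cdot a=f\cdot(g\cdot a)$ for $f,g\in B(G)$ and $a\in A$, so $A$ is a genuine left $B(G)$-module; and because $\delta_G$ is a homomorphism with $\delta_G(x)=x\otimes x$, each bitranslation $f\mapsto x\cdot f\cdot y$ is an algebra automorphism of $B(G)$, i.e. $x\cdot(gf)\cdot y=(x\cdot g\cdot y)(x\cdot f\cdot y)$. Granting these, $\EE(J)$ is visibly a linear subspace, it is $G$-invariant by the very form of its defining condition, and it is an ideal: for $f\in\EE(J)$, $g\in B(G)$, $a\in J$, and $x,y\in G$ we get $(x\cdot(gf)\cdot y)\cdot a=(x\cdot g\cdot y)\cdot\bigl((x\cdot f\cdot y)\cdot a\bigr)=0$. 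To see that $\EE(J)$ contains $B_r(G)$, note that $J$ small means $J\subseteq\ker j_A=\JJ(B_r(G))$ (the normalization being $B_r(G)$-determined), so $B_r(G)\cdot J=\{0\}$; since $B_r(G)$ is itself $G$-invariant, every bitranslate of an element of $B_r(G)$ again kills $J$, whence $B_r(G)\subseteq\EE(J)$.

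Weak*-closedness of $\EE(J)$ is where slice properness does its work, and I expect this to be the main (though not deep) obstacle, since it is the one condition that can fail without the hypothesis. For fixed $x,y\in G$ the map $f\mapsto x\cdot f\cdot y$ is weak*-to-weak* continuous, being the adjoint of a bounded operator on $C^*(G)$, and by \lemref{weak*} slice properness makes $g\mapsto g\cdot a$ weak*-to-weakly continuous for each $a$; composing, $f\mapsto(x\cdot f\cdot y)\cdot a$ is weak*-to-weakly continuous, so its zero set is weak* closed. Then $\EE(J)=\bigcap_{x,y\in G,\,a\in J}\{f:(x\cdot f\cdot y)\cdot a=0\}$ is weak* closed as an intersection of weak* closed sets, completing the proof that $\EE(J)$ is large.

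Finally, the two Galois containments are formal consequences of $G$-invariance. For $f\in E$ (with $E$ large, hence $G$-invariant) and $a\in\JJ(E)$ we have $x\cdot f\cdot y\in E$, so $(x\cdot f\cdot y)\cdot a=0$; thus $f\in\EE(\JJ(E))$ and $E\subseteq\EE(\JJ(E))$. Conversely, for $a\in J$ and $f\in\EE(J)$, taking $x=y=e$ in the defining condition gives $f\cdot a=0$, so $a\in\JJ(\EE(J))$ and $J\subseteq\JJ(\EE(J))$. Assembling these pieces yields the Galois correspondence.
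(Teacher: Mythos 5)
Your proof is correct and takes essentially the same approach as the paper's: both reduce the lemma to showing that $\EE(J)$ is a large ideal, use \lemref{weak*} (where slice properness enters) to get weak* closedness, derive $B_r(G)\subseteq\EE(J)$ from $J\subseteq\ker j_A$, and dispose of order-reversal and the containments $E\subseteq\EE(\JJ(E))$, $J\subseteq\JJ(\EE(J))$ formally. You merely make explicit some steps the paper calls obvious --- the module identity $(fg)\cdot a=f\cdot(g\cdot a)$, multiplicativity of the bitranslations $f\mapsto x\cdot f\cdot y$, and their weak*-to-weak* continuity --- all of which are accurate.
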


\begin{proof}
We already know that if $E$ is a large ideal of $B(G)$ then
$\JJ(E)$ is 
a
small ideal of $A$,
so it suffices to show that
if $J$ is 
a
small ideal of $A$ then
$\EE(J)$ is a nonzero weak*-closed $G$-invariant ideal of $B(G)$, because
it is obvious that $\JJ$ and $\EE$ are inclusion-reversing,
$\EE(\JJ(E))\supset E$, and $\JJ(\EE(J))\supset J$.
$\EE(J)$ is obviously an ideal of $B(G)$, and it is 
$G$-invariant by definition.
Since the coaction $(A,\delta)$ is slice proper, for every $a\in A$ the map $f\mapsto f\cdot a$ is weak*-weakly continuous by \lemref{weak*}, so $\EE(J)$ is weak* closed.
Since $J\subset \ker j_A$ we have
\[
\EE(J)\supset \EE(\ker j_A)\supset B_r(G),
\]
so $\EE(J)$ is nonzero.
\end{proof}

\begin{ex}
In the case of the coaction $(C^*(G),\delta_G)$, we have:
\begin{itemize}
\item $\JJ(E)=\pann E$;
\item $\EE(J)=J\ann$;
\item $\EE(\JJ(E))=E$;
\item $\JJ(\EE(J))=J$.
\end{itemize}
\end{ex}

\begin{cor}\label{J=J(E)}
Let $(A,\delta)$ be a slice proper coaction,
let $J$ be 
a
small ideal of $A$,
and let $E$ be a large ideal of $B(G)$.
Suppose that $\EE(J)=\EE(\JJ(E))$,
and that $J=\JJ(E')$ for some large ideal $E'$.
Then $J=\JJ(E)$.
\end{cor}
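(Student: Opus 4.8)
The plan is to treat this entirely as a formal consequence of the Galois correspondence established in \lemref{galois ideal}, feeding the hypotheses into consequence (iii) of the Galois-correspondence properties recorded at the end of \secref{prelim}. By \lemref{galois ideal}, the pair $(\JJ_\delta,\EE_\delta)$ is a Galois correspondence between the large ideals of $B(G)$ and the small ideals of $A$; in the notation of those properties we take $f=\JJ_\delta$ (sending large ideals of $B(G)$ to small ideals of $A$) and $g=\EE_\delta$ (sending small ideals of $A$ to large ideals of $B(G)$), so that property (iii) reads: $\EE(\JJ(E_1))=\EE(\JJ(E_2))$ implies $\JJ(E_1)=\JJ(E_2)$.

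The steps are then short. First I would use the hypothesis $J=\JJ(E')$ to rewrite $\EE(J)=\EE(\JJ(E'))$. Second, combining this with the hypothesis $\EE(J)=\EE(\JJ(E))$ yields
\[
\EE(\JJ(E'))=\EE(\JJ(E)).
\]
Third, I would invoke Galois property (iii) with $f=\JJ$ and $g=\EE$ to conclude $\JJ(E')=\JJ(E)$. Finally, since $J=\JJ(E')$ by assumption, this gives $J=\JJ(E)$, as desired. (An equivalent route is to apply $\JJ$ to both sides of $\EE(J)=\EE(\JJ(E))$ and use property (i), $\JJ\circ\EE\circ\JJ=\JJ$, together with the fact that $\JJ(\EE(J))=\JJ(\EE(\JJ(E')))=\JJ(E')=J$; both amount to the same collapse.)

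I do not expect a genuine obstacle here: all of the analytic content—that $\EE(J)$ is a nonzero weak*-closed $G$-invariant, hence large, ideal of $B(G)$, which is where slice properness enters via \lemref{weak*}—is already packaged into \lemref{galois ideal}. The only point requiring a moment's care is bookkeeping: verifying that property (iii) is applied in the correct direction, i.e.\ that the hypothesis $J=\JJ(E')$ (rather than $J$ being an arbitrary small ideal) is exactly what places $J$ in the image of $\JJ$ so that $\EE(J)$ factors as $\EE\circ\JJ$ applied to a large ideal. Without that hypothesis one could only conclude $\JJ(\EE(J))=\JJ(E)$, and the extra input $J=\JJ(E')$ is precisely what upgrades this to $J=\JJ(E)$.
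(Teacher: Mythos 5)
Your proof is correct and takes essentially the same approach as the paper: the paper's proof is the one-line remark that the corollary ``follows from the properties of Galois correspondences,'' and your application of Galois property (iii) (equivalently, property (i) via $\JJ\circ\EE\circ\JJ=\JJ$) with $f=\JJ_\delta$, $g=\EE_\delta$ from \lemref{galois ideal} is precisely the detail being left to the reader. Your closing observation --- that the hypothesis $J=\JJ(E')$ is exactly what places $J$ in the image of $\JJ$, without which one could only conclude $\JJ(\EE(J))=\JJ(E)$ --- correctly identifies where that hypothesis enters.
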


\begin{proof}
This follows from the properties of Galois correspondences.
\end{proof}

\begin{lem}\label{equivariant}
Let $(A,\delta)$ and $(C,\epsilon)$ be slice proper coactions of $G$, 
let $\varphi:A\to M(C)$ be a $\delta-\epsilon$ equivariant nondegenerate homomorphism,
let $J$ be a 
small ideal of $A$,
and let $E$ be a large ideal of $B(G)$.
Then 
\begin{enumerate}
\item
The ideal
\[
\varphi_*(J):=\clspn\{C\varphi(J)C\}
\]
of $C$ is 
small.

\item
$\varphi_*(\JJ_\delta(E))\subset \JJ_\epsilon(E)$.

\item
Suppose
\begin{itemize}
\item $\varphi$ is faithful,

\item $\EE(\JJ_\delta(E))=E$,

\item $C=\clspn\{D\varphi(A)\}$
for a nondegenerate $C^*$-subalgebra $D$ of $M(C)$ such that
$\bar\epsilon(d)=d\otimes 1$ for all $d\in D$, and

\item $\varphi_*(\JJ_\delta(E))=\JJ_\epsilon(E')$ for some $E'$.
\end{itemize}
Then $\varphi_*(\JJ_\delta(E))=\JJ_\epsilon(E)$.
\end{enumerate}
\end{lem}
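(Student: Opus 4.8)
The plan is to prove the three parts in order, since (iii) rests on (i) and (ii). For part (i) I would check the two defining properties of a small ideal for $\varphi_*(J)$: that it lies in $\ker j_C$ (where $j_C\colon C\to M(C\rtimes_\epsilon G)$) and that it is $\epsilon$-invariant. Functoriality gives a nondegenerate homomorphism $\varphi\rtimes G\colon A\rtimes_\delta G\to M(C\rtimes_\epsilon G)$ with $(\varphi\rtimes G)\circ j_A=j_C\circ\varphi$ on multipliers; as $J\subset\ker j_A$ this yields $\bar{j_C}(\varphi(J))=0$, and since $\ker j_C$ is an ideal of $C$ the factorization $j_C(c\varphi(a)c')=j_C(c)\,\bar{j_C}(\varphi(a))\,j_C(c')$ shows $C\varphi(J)C\subset\ker j_C$, hence $\varphi_*(J)\subset\ker j_C$. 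For $\epsilon$-invariance, with $R\colon C\to C/\varphi_*(J)$ the quotient map, I would apply $(R\otimes\id)\circ\epsilon$ to a generator $c\varphi(a)c'$ (with $a\in J$): equivariance turns the middle factor into $((R\circ\varphi)\otimes\id)(\delta(a))$, and since $R\circ\varphi$ annihilates $J$ it factors as $\wilde\varphi\circ Q$ through $Q\colon A\to A/J$, so that factor equals $(\wilde\varphi\otimes\id)\big((Q\otimes\id)\delta(a)\big)=0$ by $\delta$-invariance of $J$.

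For part (ii) I would use the description $\JJ_\delta(E)=\ker(\id\otimes q_E)\circ\delta$. For $a\in\JJ_\delta(E)$ and $c,c'\in C$, applying $(\id\otimes q_E)\circ\epsilon$ to $c\varphi(a)c'$ and using equivariance renders the middle factor $(\varphi\otimes\id)\big((\id\otimes q_E)\delta(a)\big)=0$; thus $c\varphi(a)c'\in\JJ_\epsilon(E)=\ker(\id\otimes q_E)\circ\epsilon$, and taking the closed span gives $\varphi_*(\JJ_\delta(E))\subset\JJ_\epsilon(E)$.

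For part (iii), write $J_C=\varphi_*(\JJ_\delta(E))$, which is small by (i). My goal would be to prove $\EE_\epsilon(J_C)=E$: since $J_C=\JJ_\epsilon(E')$ is then a closed element of the Galois correspondence of \lemref{galois ideal}, the identity $g\circ f\circ g=g$ gives $J_C=\JJ_\epsilon(\EE_\epsilon(J_C))=\JJ_\epsilon(E)$, which is the desired equality. One containment is free from (ii): $J_C\subset\JJ_\epsilon(E)$ and $\EE_\epsilon$ reverses inclusions, so $\EE_\epsilon(J_C)\supset\EE_\epsilon(\JJ_\epsilon(E))\supset E$. For the reverse containment $\EE_\epsilon(J_C)\subset\EE_\delta(\JJ_\delta(E))=E$, I would take $f\in\EE_\epsilon(J_C)$, fix $x,y\in G$, and set $g=x\cdot f\cdot y$. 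First note $D\,\varphi(\JJ_\delta(E))\subset J_C$ (because $C=\clspn\{D\varphi(A)\}$ and $J_C$ is an ideal of $C$), so $g\cdot(d\varphi(a))=0$ for all $d\in D$ and $a\in\JJ_\delta(E)$.

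The crux, and the step I expect to be the main obstacle, is to extract from this that $g\cdot a=0$. Using $\bar\epsilon(d)=d\otimes 1$ and equivariance I would compute
\[
g\cdot(d\varphi(a))=(\id\otimes g)\big((d\otimes 1)(\varphi\otimes\id)\delta(a)\big)=d\,\varphi\big((\id\otimes g)\delta(a)\big)=d\,\varphi(g\cdot a).
\]
Hence $d\,\varphi(g\cdot a)=0$ for every $d\in D$; taking adjoints (as $D$ is a $*$-subalgebra) gives $\varphi\big((g\cdot a)^*\big)D=0$, and nondegeneracy of $D$ (so $\bar{DC}=C$) forces $\varphi\big((g\cdot a)^*\big)C=0$, whence $\varphi(g\cdot a)=0$ and, by faithfulness of $\varphi$, $g\cdot a=0$. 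As $x,y$ and $a\in\JJ_\delta(E)$ were arbitrary, this says precisely $f\in\EE_\delta(\JJ_\delta(E))=E$, completing the reverse containment and hence the proof. The delicate points throughout are the routine-but-necessary passages to multiplier algebras (extending $\varphi\rtimes G$, $R\otimes\id$, and $\id\otimes q_E$), but the genuine content is the module computation $g\cdot(d\varphi(a))=d\,\varphi(g\cdot a)$, where the hypothesis $\bar\epsilon(d)=d\otimes 1$ is exactly what is needed to slide $d$ past the slice map.
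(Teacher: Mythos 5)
Your proposal is correct and follows essentially the same route as the paper's proof: parts (i) and (ii) via the same equivariance computations on generators $c\varphi(a)c'$, and part (iii) via the Galois correspondence of \lemref{galois ideal} reduced to the containment $\EE_\epsilon(\varphi_*(\JJ_\delta(E)))\subset E$, using the identity $g\cdot(d\varphi(a))=d\,\varphi(g\cdot a)$ exactly as in the paper (which invokes \corref{J=J(E)} where you unwind the Galois identities by hand). Your explicit handling of the translates $g=x\cdot f\cdot y$ is a small point the paper's computation leaves implicit, but it is the same argument.
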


\begin{rems}
(1)
Note that (iii) above does not say that $E'=E$, even when both are large ideals of $B(G)$.
The hypotheses in (iii) might seem artificial, but we will see several naturally-occuring situations where they are all satisfied.

(2)
Item (ii) above can be used to show that the assignment $(A,\delta)\mapsto (A^E,\delta^E)$ can be parlayed into a functor, as in \cite[Section~6]{BusEch}, but we have no need for this in the current paper.
\end{rems}

\begin{proof}
(i)
Let $Q:A\to A/J$ and $R:C\to C/\varphi_*(J)$ be the quotient maps.
The hypotheses imply that $J\subset \ker \bar R\circ\varphi$, so there is a homomorphism $\psi$ making the diagram
\[
\xymatrix{
A \ar[r]^-\varphi \ar[d]_Q
&M(C) \ar[d]^{\bar R}
\\
A/J \ar@{-->}[r]_-\psi
&M(C/\varphi_*(J))
}
\]
commute.

We must show that
$\varphi_*(J)\subset \ker(R\otimes\id)\circ\epsilon$,
and it suffices to show that
$J\subset \ker (R\otimes\id)\circ\bar\epsilon\circ\varphi$:
for $j\in J$ we have
\begin{align*}
(R\otimes\id)\circ\bar\epsilon\circ\varphi(j)
&=(R\otimes\id)\circ(\varphi\otimes\id)\circ\delta(j)
\\&=\bigl(\bar R\circ \varphi\otimes\id\bigr)\circ\delta(j)
\\&=(\psi\circ Q\otimes\id)\circ\delta(j)
\\&=(\psi\otimes\id)\circ (Q\otimes\id)\circ\delta(j)
\\&=0,
\end{align*}
because $J\subset \ker (Q\otimes\id)\circ\delta$.

To see that $\varphi_*(J)$ is small,
we have
\[
J\subset \ker j_A\subset \ker (\varphi\times G)\circ j_A=\ker (j_C)\circ\varphi,
\]
and it follows that
\[
\varphi_*(J)\subset \ker j_C.
\]

(ii)
If $a\in \JJ_\delta(E)$, then for all $b,c\in C$ we have
\begin{align*}
&(\id\otimes q_E)\circ\epsilon(b\varphi(a)c)
\\&\quad=(\id\otimes q_E)\circ\epsilon(b)(\id\otimes q_E)\circ\bar\epsilon\circ\varphi(a)
(\id\otimes q_E)\circ\epsilon(c)
\\&\quad=0,
\end{align*}
because
\begin{align*}
(\id\otimes q_E)\circ\bar\epsilon\circ\varphi(a)
&=(\id\otimes q_E)\circ(\varphi\otimes\id)\circ\delta(a)
\\&=(\varphi\otimes\id)\circ(\id\otimes q_E)\circ\delta(a)
\\&=(\varphi\otimes\id)(0).
\end{align*}
Thus $b\varphi(a)c\in \JJ_\epsilon(E)$.

(iii)
By \corref{J=J(E)}
it suffices to show that
$E\bigl(\varphi_*(\JJ_\delta(E))\bigr)=E\bigl(\JJ_{\epsilon}(E)\bigr)$,
and 
since $E\subset \EE(\JJ_{\epsilon}(E))$,
it further suffices to show that
$\EE(\varphi_*(\JJ_\delta(E)))\subset E$:
if $f\in \EE(\varphi_*(\JJ_\delta(E)))$, then for all $d,d'\in D$ and $a\in \JJ_\delta(E)$ we have
\begin{align*}
0
&=f\cdot(d\varphi(a)d')
\\&=df\cdot (\varphi(a))d'
\quad\text{(since $\bar\epsilon$ is trivial on $D$)}
\\&=d\varphi(f\cdot a)d'
\quad\text{(since $\varphi$ is equivariant)},
\end{align*}
and hence $f\cdot a=0$ since 
$\varphi$ is faithful and
$D$ is nondegenerate in $M(C)$.
Thus $f\in \EE(\JJ_\delta(E))=E$.
\end{proof}

\begin{lem}\label{ampliate}
Let $(A,\delta)$ be a coaction,
let $E$ be a large ideal of $B(G)$
such that $\EE(\JJ_\delta(E))=E$,
let $D$ be a $C^*$-algebra,
and
let $\id\otimes\delta$ be the tensor-product coaction on $D\otimes A$.
Then:
\begin{enumerate}
\item
The ideal $D\otimes \JJ_\delta(E)$ of $D\otimes A$ is 
small, and
is contained in $\JJ_{\id\otimes\delta}(E)$.

\item
If $D\otimes\JJ_\delta(E)=\JJ_{\id\otimes\delta}(E')$
for some large ideal $E'$, then
$D\otimes\JJ_\delta(E)=\JJ_{\id\otimes\delta}(E)$.

\item
$\JJ_{\id\otimes\delta}(E)=\ker(\id_D\otimes Q_E)$,
where $Q_E:A\to A^E$ is the quotient map,
so
$D\otimes\JJ_\delta(E)=\JJ_{\id\otimes\delta}(E)$ if and only if
the sequence
\[
\xymatrix{
0 \to
D\otimes \JJ_\delta(E) \to
D\otimes A \to
D\otimes A^E \to
0
}
\]
is exact.
\end{enumerate}
\end{lem}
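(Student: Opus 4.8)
The engine of the argument is part (iii), so the plan is to establish it first. Using the kernel description $\JJ_\gamma(E)=\ker(\id\otimes q_E)\circ\gamma$ from \secref{once}, applied to the coaction $\gamma=\id\otimes\delta$ on $D\otimes A$, one checks on elementary tensors that
\[
(\id_{D\otimes A}\otimes q_E)\circ(\id_D\otimes\delta)=\id_D\otimes\bigl((\id_A\otimes q_E)\circ\delta\bigr),
\]
so that $\JJ_{\id\otimes\delta}(E)=\ker\bigl(\id_D\otimes((\id_A\otimes q_E)\circ\delta)\bigr)$. Now the two homomorphisms $(\id_A\otimes q_E)\circ\delta$ and $Q_E$ have the \emph{same} kernel $\JJ_\delta(E)$, hence each factors as an injective homomorphism composed with $Q_E:A\to A^E$. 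Since the minimal tensor product preserves injectivity of $*$-homomorphisms, tensoring these injective factors with $\id_D$ keeps them injective, whence $\ker(\id_D\otimes\phi)=\ker(\id_D\otimes Q_E)$ for any $\phi$ with $\ker\phi=\JJ_\delta(E)$. This gives $\JJ_{\id\otimes\delta}(E)=\ker(\id_D\otimes Q_E)$. The stated exactness criterion is then immediate: $\id_D\otimes Q_E$ is automatically surjective and the inclusion is automatically injective, so the displayed sequence is exact precisely when $D\otimes\JJ_\delta(E)$ fills up $\ker(\id_D\otimes Q_E)=\JJ_{\id\otimes\delta}(E)$.

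For (i), the containment $D\otimes\JJ_\delta(E)\subset\JJ_{\id\otimes\delta}(E)$ drops out of the same computation, since $\bigl(\id_D\otimes((\id_A\otimes q_E)\circ\delta)\bigr)(d\otimes j)=d\otimes 0=0$ for $j\in\JJ_\delta(E)$. To see that $D\otimes\JJ_\delta(E)$ is $(\id\otimes\delta)$-invariant I would invoke the slicing argument from the proof of \lemref{invariant}, which shows that an ideal is invariant as soon as it is a $B(G)$-submodule: the module action for $\id\otimes\delta$ is $f\cdot(d\otimes a)=d\otimes(f\cdot a)$, so $\JJ_\delta(E)$ being a submodule forces $D\otimes\JJ_\delta(E)$ to be one. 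Finally, smallness is inherited, since $E$ is large implies $\JJ_{\id\otimes\delta}(E)$ is small by the Observation of \secref{once}, hence lies in $\ker j_{D\otimes A}$, and so does the subideal $D\otimes\JJ_\delta(E)$.

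For (ii) I would run the Galois machinery of \secref{counter} on $(D\otimes A,\id\otimes\delta)$, which first requires knowing that $\id\otimes\delta$ is slice proper. I would deduce this by embedding $A\hookrightarrow M(D^+\otimes A)$ via $a\mapsto 1\otimes a$ (with $D^+$ the unitization), a nondegenerate equivariant homomorphism, applying \thmref{strict morphism} to get slice properness of $\id\otimes\delta$ on $D^+\otimes A$, and then restricting to the ideal $D\otimes A$ by extending functionals via Hahn--Banach. The key computation is
\[
\EE_{\id\otimes\delta}(D\otimes\JJ_\delta(E))=\EE_\delta(\JJ_\delta(E))=E,
\]
which follows because $(x\cdot f\cdot y)\cdot(d\otimes a)=d\otimes((x\cdot f\cdot y)\cdot a)$ annihilates all of $D\otimes\JJ_\delta(E)$ exactly when $x\cdot f\cdot y$ annihilates $\JJ_\delta(E)$, together with the hypothesis $\EE_\delta(\JJ_\delta(E))=E$. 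Since $D\otimes\JJ_\delta(E)\subset\JJ_{\id\otimes\delta}(E)$ and $\EE$ reverses inclusions, this computation also forces $\EE_{\id\otimes\delta}(\JJ_{\id\otimes\delta}(E))=E$, so the hypotheses of \corref{J=J(E)} hold; granting $D\otimes\JJ_\delta(E)=\JJ_{\id\otimes\delta}(E')$ from (ii), that corollary yields $D\otimes\JJ_\delta(E)=\JJ_{\id\otimes\delta}(E)$.

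The main obstacle is conceptual rather than computational, and it is isolated in (iii): the minimal tensor product preserves injectivity of a single homomorphism but is \emph{not} exact unless $D$ is an exact $C^*$-algebra. This is exactly why $\JJ_{\id\otimes\delta}(E)=\ker(\id_D\otimes Q_E)$ holds unconditionally, whereas the identity $\JJ_{\id\otimes\delta}(E)=D\otimes\JJ_\delta(E)$ can fail and is equivalent to exactness of the ampliated sequence. The only other point demanding care is the verification of slice properness for $\id\otimes\delta$, needed so that the $\EE$--$\JJ$ Galois correspondence of \lemref{galois ideal} is available for part (ii).
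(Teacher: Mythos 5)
Your proposal is correct, and for part (iii) it is essentially the paper's own argument: both of you rewrite $\JJ_{\id\otimes\delta}(E)$ as $\ker\bigl(\id_D\otimes((\id_A\otimes q_E)\circ\delta)\bigr)$, factor $(\id_A\otimes q_E)\circ\delta$ through $Q_E$ by an injective homomorphism $\tilde\delta$, and use that ampliating an injective map by $\id_D$ preserves injectivity on the minimal tensor product, giving $\JJ_{\id\otimes\delta}(E)=\ker(\id_D\otimes Q_E)$ unconditionally while the equality with $D\otimes\JJ_\delta(E)$ becomes the exactness question. Where you genuinely diverge is in (i) and (ii): the paper disposes of both at once by verifying the hypotheses of \lemref{equivariant}, including its part (iii), for $\varphi=1\otimes\id_A:A\to M(D\otimes A)$, with $D\otimes 1$ playing the role of the nondegenerate subalgebra on which the coaction is trivial (so that $\varphi_*(\JJ_\delta(E))=D\otimes\JJ_\delta(E)$); you instead inline the content of that lemma in the tensor-product case, proving invariance of $D\otimes\JJ_\delta(E)$ via the $B(G)$-submodule mechanism from the proof of \lemref{invariant} (using $f\cdot(d\otimes a)=d\otimes(f\cdot a)$), getting containment in $\ker j_{D\otimes A}$ from the containment in the small ideal $\JJ_{\id\otimes\delta}(E)$, and proving (ii) by computing $\EE_{\id\otimes\delta}(D\otimes\JJ_\delta(E))=\EE_\delta(\JJ_\delta(E))=E$ and then quoting \corref{J=J(E)} --- which is precisely the computation hidden inside the proof of \lemref{equivariant}(iii), made concrete. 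Your route buys transparency (the submodule trick makes invariance immediate, bypassing the diagram chase of \lemref{equivariant}(i)), and your explicit attention to slice properness is apt: the Galois machinery of \lemref{galois ideal} behind \corref{J=J(E)} requires it, and the paper's proof leans on the same hypothesis implicitly, since \lemref{equivariant} assumes slice properness and the paper's standing convention from this section onward supplies it for $(A,\delta)$. Two small economies are available to you: the unitization detour is unnecessary, because \thmref{strict morphism} applies directly to the nondegenerate equivariant map $1\otimes\id_A:A\to M(D\otimes A)$ to yield slice properness of $\id\otimes\delta$ on $D\otimes A$ itself; and your ``exactly when'' in the computation of $\EE_{\id\otimes\delta}(D\otimes\JJ_\delta(E))$ silently assumes $D\ne\{0\}$, which is harmless since the zero case is vacuous.
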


\begin{proof}
For the first two parts, we verify the hypotheses of \lemref{equivariant},
including those of part (iii),
with
$(C,\epsilon)=(D\otimes A,\id\otimes\delta)$,
$\varphi=1\otimes\id_A$,
and $D$ in \lemref{equivariant} replaced by $D\otimes 1$.
The map $1\otimes \id_A:A\to M(D\otimes A)$
is $\delta-(\id\otimes\delta)$ equivariant, nondegenerate, and faithful,
$D\otimes A=\clspn\{(D\otimes 1)(1\otimes A)\}$,
$D\otimes 1$ is a nondegenerate $C^*$-subalgebra of $M(D\otimes A)$,
and $(\id\otimes\delta)(d\otimes 1)=d\otimes 1\otimes 1$ for all $d\in D$.

For (iii),
note that
\[
\JJ_{\id\otimes\delta}(E)
=\ker(\id_D\otimes\id_A\otimes q_E)\circ(\id_D\otimes\delta).
\]
Since
\[
\ker(\id_A\otimes q_E)\circ\delta=\JJ_\delta(E)=\ker Q_E,
\]
there is an injective homomorphism $\tilde\delta$ making the diagram
\[
\xymatrix@C+30pt{
D\otimes A \ar[r]^-{\id\otimes\delta} \ar[d]_{\id\otimes Q_E}
&D\otimes A\otimes C^*(G) \ar[d]^{\id\otimes\id\otimes q_E}
\\
D\otimes A^E \ar@{-->}[r]_-{\id\otimes\tilde\delta}
&D\otimes A\otimes C^*_E(G)
}
\]
commute.
Therefore
$\JJ_{\id\otimes\delta}(E)=\ker(\id_D\otimes Q_E)$.
\end{proof}

\begin{thm}\label{counterexample}
Let $G$ be nonamenable and residually finite, e.g., $\F_2$,
and consider the tensor product coaction $(C^*(G)\otimes C^*(G),\id\otimes\delta_G)$.
Then the ideal $C^*(G)\otimes\ker\lambda$
is 
small, but
is not of the form $\JJ(E)$,
and hence the associated quotient coaction is not $E$-determined,
for any large ideal $E$ of $B(G)$.
\end{thm}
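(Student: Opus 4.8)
The plan is to run everything through \lemref{ampliate}, applied to the coaction $(\csg,\delta_G)$, the large ideal $E=\brg$, and the tensor factor $D=\csg$. First I would record the two facts needed to invoke that lemma. Since $\JJ_{\delta_G}(E)=\pann E$ for the canonical coaction, taking $E=\brg$ gives $\JJ_{\delta_G}(\brg)=\pann\brg=\ker\lambda$; and the Galois computation for $(\csg,\delta_G)$ gives $\EE_{\delta_G}(\JJ_{\delta_G}(\brg))=\brg$, which is exactly the standing hypothesis $\EE(\JJ_\delta(E))=E$ of \lemref{ampliate}. With these in hand, part (i) of that lemma immediately shows that
\[
\csg\otimes\ker\lambda=D\otimes\JJ_{\delta_G}(\brg)
\]
is a small ideal of $\csg\otimes\csg$; this settles the first assertion, the $(\id\otimes\delta_G)$-invariance being part of smallness.

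For the second assertion I would argue by contradiction. Suppose $\csg\otimes\ker\lambda=\JJ_{\id\otimes\delta_G}(E')$ for some large ideal $E'$ of $\bg$. Rewriting the left-hand side as $D\otimes\JJ_{\delta_G}(\brg)$, this is precisely the hypothesis of \lemref{ampliate}(ii) (with the distinguished ideal there being $\brg$), so that lemma forces
\[
\csg\otimes\ker\lambda=\JJ_{\id\otimes\delta_G}(\brg).
\]
Now \lemref{ampliate}(iii) identifies $\JJ_{\id\otimes\delta_G}(\brg)$ with $\ker(\id\otimes\lambda)$, since $Q_{\brg}=\lambda$, and states that the displayed equality is equivalent to exactness of
\[
0\to \csg\otimes\ker\lambda\to \csg\otimes\csg\xrightarrow{\ \id\otimes\lambda\ }\csg\otimes\csrg\to 0 .
\]
Thus the hypothesized $E'$ would force this sequence to be exact, and the whole theorem comes down to showing that it is \emph{not}.

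The remaining, and essential, point is the non-exactness of this sequence, and this is where both hypotheses on $G$ are used. Nonamenability provides the defect: $\ker\lambda\ne\{0\}$, and the trivial representation $1_G$ is not weakly contained in $\lambda$, hence does not factor through $\csrg$. Residual finiteness provides finite-dimensional approximations of $\lambda$: choosing finite-index normal subgroups $N_k$ with $\bigcap_k N_k=\{e\}$ and letting $\sigma_k$ be the left regular representation of $G/N_k$ pulled back along $G\to G/N_k$, one has $1_G\prec\sigma_k$ for every $k$, while $\lambda\prec\bigoplus_k\sigma_k$. This is exactly the configuration behind Wassermann's theorem that $\csg$ is not exact, in the sharp form that the displayed sequence itself fails to stay exact under $\csg\otimes_{\min}(-)$; concretely one produces an element of $\ker(\id\otimes\lambda)$ that is detected on the first leg by the family $\{\sigma_k\}$ yet does not lie in $\csg\otimes\ker\lambda$. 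I expect this non-exactness to be the main obstacle: the reduction in the first two paragraphs is purely formal once \lemref{ampliate} is available, whereas the failure of exactness is the genuine analytic input, and it is precisely to secure it that one assumes $G$ to be nonamenable \emph{and} residually finite.
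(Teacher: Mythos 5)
Your proposal is correct and follows essentially the same route as the paper: reduce via \lemref{ampliate} (with $E=\brg$, using $\JJ_{\delta_G}(\brg)=\ker\lambda$ and $\EE(\JJ_{\delta_G}(\brg))=\brg$) to the non-exactness of $0\to \csg\otimes\ker\lambda\to\csg\otimes\csg\to\csg\otimes\csrg\to 0$. The only difference is that where you sketch the Wassermann-style argument from nonamenability and residual finiteness, the paper simply cites \cite[Proposition~3.7.10]{BO}, which is precisely that statement.
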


\begin{proof}
By \cite[Proposition~3.7.10]{BO}, the sequence
\[
\xymatrix{
0 \to
C^*(G)\otimes \ker\lambda \to
C^*(G)\otimes C^*(G) \to
C^*(G)\otimes C^*_r(G) \to
0
}
\]
is not exact.
We have
\[
\ker\lambda=\JJ_{\delta_G}(B_r(G))\midtext{and}
\EE(\JJ_{\delta_G}(B_r(G)))=B_r(G),
\]
so the result follows from \corref{ampliate}.
\end{proof}

\begin{rems}
(1)
It follows from \cite[Lemma~1.16(a)]{fullred} that the coaction $(D\otimes_{\max} A,\id\tilde\otimes\delta)$ is maximal.
For the case $(A,\delta)=(C^*(G),\delta_G)$, Buss and Echterhoff \cite[Example~5.4]{BusEch} have shown that whenever the canonical map $D\otimes_{\max} C^*(G)\to D\otimes C^*(G)$ is not faithful then the coaction $(D\otimes C^*(G),\id\otimes\delta_G)$ is not $E$-determined from its maximalization for any large ideal $E$ of $B(G)$.

(2)
\thmref{counterexample} shows that the map $\JJ$ from large ideals of $B(G)$ to small 
ideals of $A$ is not surjective in general.
It is easy to see that $\JJ$ is also generally not injective, either. For the most extreme source of examples of this, let $\delta$ be a coaction that is both maximal and normal, and let $G$ be nonamenable. Then $\{0\}$ is the only small ideal of $A$, but there can be many large ideals of $B(G)$ --- indeed, it follows from a result of \cite{okayasu} that $B(\F_n)$ has a continuum of such ideals whenever $n\ge 2$ --- see the discussion preceding \propref{ok} below for further discussion of this.

(3)
Similarly to \corref{ampliate}, if $(B,\alpha)$ is an action then the ideal
\[
(i_G)_*(\pann E)=\clspn\{(B\rtimes_\alpha G)i_G(\pann E)(B\rtimes_\alpha G)\}
\]
of $B\rtimes_\alpha G$ is 
small,
is contained in 
$\JJ_{\what\alpha}(E)$,
and 
is of the form $\JJ_{\what\alpha}(E')$ for some coaction ideal $E'$ if and only if
it in fact equals $\JJ_{\what\alpha}(E)$.
Since we have no application of this result in mind, we omit the proof --- it follows from \propref{equivariant} similarly to \corref{ampliate}.
This result is not quite a generalization of \corref{ampliate}, because $B\rtimes_\iota G\cong B\otimes_{\max} C^*(G)$, not $B\otimes C^*(G)$ (where $\iota$ denotes the trivial action).
\end{rems}

\section{$E$-determined twice}\label{twice}

Suppose that $(A,\delta)$ is a slice proper maximal coaction for which every 
small ideal is of the form $\JJ(E)$ for some large ideal $E$ of $B(G)$.
Let $J_1\subset J_2$ be two 
small ideals of $A$, so that by assumption we have
$J_i=\JJ_\delta(E_i)$ for some $E_1,E_2$.
By our general theory, we can assume without loss of generality that
\[
E_i=\EE(J_i):=\{f\in B(G):(x\cdot f\cdot y)\cdot J_i=\{0\} \text{ for all }x,y\in G\}.
\]
Then $E_1\supset E_2$, and there exist:
\begin{enumerate}
\item coactions $\delta_i$ of $G$ on the quotients $A_i=A/J_i$,
\item $\delta-\delta_i$ equivariant surjections $Q_i:A\to A_i$, and
\item a $\delta_1-\delta_2$ equivariant surjection $Q_{12}$ making the diagram
\[
\xymatrix{
A \ar[r]^-{Q_1} \ar[dr]_{Q_2}
&A_1 \ar[d]^{Q_{12}}
\\
&A_2
}
\]
commute.
\end{enumerate}

\begin{q}\label{12}
With the above notation, 
is the coaction $(A_2,\delta_2)$ $E$-determined from $(A_1,\delta_1)$ for some large ideal $E$ of $B(G)$,
equivalently,
is the ideal $\ker Q_{12}$ of $A_1$ of the form $\JJ_{\delta_1}(E)$ for some $E$?
\end{q}

It 
seems difficult to answer \qref{12} ---
if we think the answer is yes, then we should presumably find an appropriate $E$. What could it be?
Certainly it could not be $E_1$, because this has nothing to do with $E_2$.
On the other hand,
in general it is not $E_2$, either, as
we will show 
in \propref{pq}.

\begin{notn}
In the following \lemref{E} and \corref{equal E}, we denote the weak* closed span of a subset $S\subset B(G)$ by $[S]$.
\end{notn}

\begin{lem}\label{E}
With the above notation, for any large ideal $E$ of $B(G)$, we have:
\begin{align}
\JJ_{\delta_1}(E)&=Q_1(\JJ_\delta([E_1E]))\label{first}\\
\ker Q_{12}&=Q_1(\JJ_\delta(E_2))\subset \JJ_{\delta_1}(E_2).\label{second}
\end{align}
\end{lem}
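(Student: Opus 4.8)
The plan is to prove \eqref{first} first, directly from the equivariance of $Q_1$ together with slice properness, and then read off \eqref{second} with almost no extra work. Throughout I would lean on three facts. First, since $Q_1$ is $\delta-\delta_1$ equivariant, the two module actions are intertwined: $f\cdot Q_1(a)=Q_1(f\cdot a)$ for all $f\in B(G)$ and $a\in A$. Second, the module identity $g\cdot(f\cdot a)=(gf)\cdot a$ already used in the proof of \lemref{invariant}. Third, the basic identification $\ker Q_1=J_1=\JJ_\delta(E_1)$ from the setup.

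To establish \eqref{first} I would fix $a\in A$ and unwind the condition $Q_1(a)\in\JJ_{\delta_1}(E)$. By the intertwining relation this says $Q_1(f\cdot a)=0$, i.e. $f\cdot a\in\ker Q_1=\JJ_\delta(E_1)$, for every $f\in E$. Applying the module identity and the definition of $\JJ_\delta(E_1)$, membership $f\cdot a\in\JJ_\delta(E_1)$ is equivalent to $(gf)\cdot a=g\cdot(f\cdot a)=0$ for all $g\in E_1$. Hence $Q_1(a)\in\JJ_{\delta_1}(E)$ if and only if $a$ is annihilated by every product $gf$ with $g\in E_1$, $f\in E$. At this point I would invoke slice properness: for each fixed $a$ the set $\{h\in B(G):h\cdot a=0\}$ equals the annihilator in $B(G)=C^*(G)^*$ of the subset $\{(\omega\otimes\id)\circ\delta(a):\omega\in A^*\}$ of $C^*(G)$ (it lands in $C^*(G)$, not merely $M(C^*(G))$, precisely by \eqref{slice proper coaction}), and is therefore a weak* closed subspace. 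Consequently $a$ is annihilated by all products $gf$ exactly when it is annihilated by their weak* closed span $[E_1E]$, giving $Q_1^{-1}(\JJ_{\delta_1}(E))=\JJ_\delta([E_1E])$. Applying the surjection $Q_1$ then yields \eqref{first}.

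For \eqref{second}, the equality $\ker Q_{12}=Q_1(\JJ_\delta(E_2))$ is purely formal: from $Q_2=Q_{12}\circ Q_1$ with $Q_1$ surjective, a standard diagram chase gives $\ker Q_{12}=Q_1(\ker Q_2)=Q_1(J_2)=Q_1(\JJ_\delta(E_2))$. For the inclusion I would specialize \eqref{first} to $E=E_2$, obtaining $\JJ_{\delta_1}(E_2)=Q_1(\JJ_\delta([E_1E_2]))$. Since $E_2$ is an ideal of $B(G)$ we have $E_1E_2\subset E_2$, hence $[E_1E_2]\subset E_2$, so $\JJ_\delta(E_2)\subset\JJ_\delta([E_1E_2])$; applying $Q_1$ produces $Q_1(\JJ_\delta(E_2))\subset\JJ_{\delta_1}(E_2)$, as required.

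The only genuinely nontrivial step is the passage inside $\JJ_\delta$ from the \emph{set} of products $E_1E$ to its weak* closed span $[E_1E]$, and this is exactly where slice properness (via \lemref{weak*}) is indispensable: it is what forces $\{h:h\cdot a=0\}$ to be weak* closed rather than merely linear, so that annihilation of $a$ by $E_1E$ propagates to $[E_1E]$. Everything else---the intertwining relation, the module identity $g\cdot(f\cdot a)=(gf)\cdot a$, surjectivity of $Q_1$, and the elementary ideal inclusion $E_1E_2\subset E_2$---is routine bookkeeping.
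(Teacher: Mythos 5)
Your proof is correct and follows essentially the same route as the paper's: the same chain of equivalences (equivariance of $Q_1$, $\ker Q_1=\JJ_\delta(E_1)$, the module identity $g\cdot(f\cdot a)=(gf)\cdot a$) establishing $Q_1^{-1}(\JJ_{\delta_1}(E))=\JJ_\delta([E_1E])$, and the same formal diagram chase plus the specialization $E=E_2$ for the second claim. The only difference is that you spell out the weak*-closure step --- that $\{h\in B(G):h\cdot a=0\}$ is weak* closed by slice properness, so annihilation by the products $E_1E$ passes to $[E_1E]$ --- which the paper compresses into the single line $0=E_1\cdot E\cdot a=[E_1E]\cdot a$, tacitly invoking the standing slice-properness hypothesis via \lemref{weak*}; making this explicit is a point in your write-up's favor, not a deviation.
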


\begin{proof}
For \eqref{first},
since $Q_1$ is a surjective linear map, it suffices to observe that for $a\in A$, we have
\begin{align*}
&Q_1(a)\in \JJ_{\delta_1}(E)
\\&\quad\Leftrightarrow 0=E\cdot Q_1(a)=Q_1(E\cdot a)\quad\text{(by equivariance)}
\\&\quad\Leftrightarrow E\cdot a\subset \ker Q_1=\JJ_\delta(E_1)
\\&\quad\Leftrightarrow 0=E_1\cdot E\cdot a=[E_1E]\cdot a
\\&\quad\Leftrightarrow a\in \JJ_\delta([E_1E]).
\end{align*}

For \eqref{second}, we first consider the equality:
since $Q_1$ is surjective and $Q_2=Q_{12}\circ Q_1$,
\[
\ker Q_{12}=Q_1(\ker Q_2)=Q_1(\JJ_\delta(E_2)).
\]
For the other part, as $[E_1E_2]\subset E_2$, we have $\JJ_\delta(E_2)\subset \JJ_\delta([E_1E_2])$,
and so the inclusion $Q_1(\JJ_\delta(E_2))\subset \JJ_{\delta_1}(E_2)$ now follows from \eqref{first} with $E=E_2$.
\end{proof}

\begin{cor}\label{equal E}
For a large ideal $E$ of $B(G)$, 
if 
$E_1E$ has weak* dense span in $E_2$,
then
$\ker Q_{12}=\JJ_{\delta_1}(E)$,
and hence the quotient $(A_2,\delta_2)$ of $(A_1,\delta_1)$ is $E$-determined from $(A_1,\delta_1)$.
\end{cor}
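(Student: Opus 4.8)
The plan is to read the result straight off the two identities in \lemref{E}, since this corollary is a formal consequence of that lemma. First I would restate the hypothesis in usable form: saying that $E_1E$ has weak* dense span in $E_2$ is precisely the assertion that $[E_1E]=E_2$ as weak* closed subspaces of $B(G)$, in the bracket notation introduced just before \lemref{E}. The only point requiring any care is this identification — that ``weak* dense span of $E_1E$ inside $E_2$'' means the weak* closed span $[E_1E]$ literally equals $E_2$ — so that $E_2$ may be substituted for $[E_1E]$ in the identities of \lemref{E}.

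Next I would apply \eqref{first} of \lemref{E} to the given large ideal $E$. That identity reads $\JJ_{\delta_1}(E)=Q_1(\JJ_\delta([E_1E]))$, and substituting $[E_1E]=E_2$ turns this into
\[
\JJ_{\delta_1}(E)=Q_1\bigl(\JJ_\delta(E_2)\bigr).
\]
On the other hand, the equality half of \eqref{second} of \lemref{E} gives exactly
\[
\ker Q_{12}=Q_1\bigl(\JJ_\delta(E_2)\bigr).
\]
Comparing the two right-hand sides immediately yields $\ker Q_{12}=\JJ_{\delta_1}(E)$.

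Finally, I would translate this into the language of \defnref{E-determined}. Since $Q_{12}$ is a $\delta_1-\delta_2$ equivariant surjection $(A_1,\delta_1)\to(A_2,\delta_2)$, the equation $\ker Q_{12}=\JJ_{\delta_1}(E)$ is by definition the statement that $(A_2,\delta_2)$ is $E$-determined from $(A_1,\delta_1)$, completing the proof. There is essentially no obstacle here beyond the bookkeeping: the entire content has been front-loaded into \lemref{E}, and the corollary is obtained by specializing \eqref{first} at the hypothesis $[E_1E]=E_2$ and matching it against \eqref{second}. Should one wish to be maximally careful, the one nontrivial input is that \eqref{first} holds for \emph{every} large ideal $E$, which is what licenses its application to the particular $E$ at hand.
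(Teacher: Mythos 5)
Your proof is correct and takes essentially the same approach as the paper: both read the corollary off the two identities of \lemref{E}, identifying the hypothesis with $[E_1E]=E_2$ and matching $\JJ_{\delta_1}(E)=Q_1(\JJ_\delta([E_1E]))$ against $\ker Q_{12}=Q_1(\JJ_\delta(E_2))$. The only (harmless) difference is that you substitute directly, while the paper first reduces $\ker Q_{12}=\JJ_{\delta_1}(E)$ to the equivalence $\JJ_\delta(E_2)=\JJ_\delta([E_1E])$ using $\ker Q_1=\JJ_\delta(E_1)\subset\JJ_\delta(E_2)\cap\JJ_\delta([E_1E])$, a biconditional it later exploits in \corref{discrete}.
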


\begin{proof}
By \lemref{E}, we have $\ker Q_{12}=\JJ_{\delta_1}(E)$ if and only if $Q_1(\JJ_\delta(E_2))=Q_1(\JJ_\delta([E_1E]))$.
Since $E_1$ contains both $E_2$ and $[E_1E]$, and since the map $\JJ$ is inclusion-reversing, we see that $\ker Q_1=\JJ_\delta(E_1)$ is contained in both $\JJ_\delta(E_2)$ and $\JJ_\delta([E_1E])$, and hence $Q_1(\JJ_\delta(E_2))=Q_1(\JJ_\delta([E_1E]))$ if and only if $\JJ_\delta(E_2)=\JJ_\delta([E_1E])$.
\end{proof}

The above lemma leads us to another question:

\begin{q}\label{E_1E}
For large ideals $E_1\supset E_2$ of $B(G)$,
does there exist a large ideal $E$ of $B(G)$ such that
$E_1E$ has weak* dense span in $E_2$?
\end{q}

By \corref{equal E}, an affirmative answer to \qref{E_1E} would imply one for \qref{12}.

Note that, even with all our restrictions on the ideals $E$, the map $\JJ$ from the large ideals of $B(G)$ to the small 
ideals of $A$ is not injective,
and so
we are lead to suspect that the converse of \corref{equal E} does not hold.
That being said, let us consider the special case $(A,\delta)=(C^*(G),\delta_G)$.
Since for this maximal coaction the map $\JJ$ from large ideals of $B(G)$ to small 
ideals is injective (in fact, is bijective),
we can conclude:

\begin{cor}\label{discrete}
With the above notation, 
the quotient $(C^*_{E_2}(G),\delta_G^{E_2})$ of $(C^*_{E_1}(G),\delta_G^{E_1})$ is $E$-determined from $(C^*_{E_1}(G),\delta_G^{E_1})$ if and only if $E_1E$ has weak* dense span in $E_2$.
\end{cor}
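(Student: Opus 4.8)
The plan is to reduce the whole statement to the injectivity of the map $\JJ$ for the canonical coaction, after which the computation already buried in the proof of \corref{equal E} does the rest. Recall from the example preceding \corref{J=J(E)} that for $(A,\delta)=(C^*(G),\delta_G)$ one has $\JJ_{\delta_G}(E)=\pann E$ and $\EE(\JJ_{\delta_G}(E))=E$ for every large ideal $E$; thus $\EE$ is a two-sided inverse of $\JJ_{\delta_G}$ on large ideals, and in particular $\JJ_{\delta_G}$ is injective. This is the one feature of the canonical coaction that the general theory of \secref{twice} lacks, and it is what upgrades \corref{equal E} to a biconditional.

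Specializing the setup of this section to $(A,\delta)=(C^*(G),\delta_G)$, so that $A_1=C^*_{E_1}(G)$ and $\delta_1=\delta_G^{E_1}$, the definition of $E$-determinedness (\defnref{E-determined}) says that $(C^*_{E_2}(G),\delta_G^{E_2})$ is $E$-determined from $(C^*_{E_1}(G),\delta_G^{E_1})$ exactly when $\ker Q_{12}=\JJ_{\delta_1}(E)$. First I would observe that the proof of \corref{equal E} already establishes, with no appeal to any special property of the coaction, the equivalence
\[
\ker Q_{12}=\JJ_{\delta_1}(E)
\iff
\JJ_{\delta_G}(E_2)=\JJ_{\delta_G}([E_1E]).
\]
Indeed, \eqref{first} and \eqref{second} of \lemref{E} rewrite the left-hand equality as $Q_1(\JJ_{\delta_G}(E_2))=Q_1(\JJ_{\delta_G}([E_1E]))$, and since $\ker Q_1=\JJ_{\delta_G}(E_1)$ is contained in both $\JJ_{\delta_G}(E_2)$ and $\JJ_{\delta_G}([E_1E])$ (because $E_1$ contains both $E_2$ and $[E_1E]$ and $\JJ$ reverses inclusions), one may cancel $Q_1$. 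This step transfers verbatim to the present setting.

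The only genuinely new ingredient, and the reason the statement is confined to the canonical coaction, is the passage from the equality $\JJ_{\delta_G}(E_2)=\JJ_{\delta_G}([E_1E])$ of small ideals of $C^*(G)$ to the equality $E_2=[E_1E]$ of large ideals of $B(G)$ --- which is precisely the assertion that $E_1E$ has weak* dense span in $E_2$. Here I would invoke injectivity of $\JJ_{\delta_G}$, which sends each large ideal $E$ to $\pann E$: both $E_2$ and $[E_1E]$ are large ideals ($[E_1E]$ being a nonzero weak* closed $G$-invariant ideal, hence large by \cite[Lemma~3.14]{graded}), so equality of the preannihilators $\pann E_2=\pann [E_1E]$ forces $E_2=[E_1E]$. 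Chaining this with the equivalence of the previous paragraph yields the biconditional.

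I do not expect a serious obstacle, since all of the delicate bookkeeping is packaged in \lemref{E} and in the proof of \corref{equal E}. The one point requiring care is to use injectivity only at the final step and not to claim it for a general $(A,\delta)$: as \thmref{counterexample} and the remark following it show, $\JJ$ can be badly non-injective in general, and it is exactly this failure that blocks the converse direction of \corref{equal E} outside the canonical case.
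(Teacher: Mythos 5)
Your proposal is correct and follows essentially the same route as the paper: the paper derives \corref{discrete} from the observation, stated just before it, that for $(C^*(G),\delta_G)$ the map $\JJ$ (sending $E$ to $\pann E$) is injective on large ideals, combined with the fact that the proof of \corref{equal E} already establishes the equivalence $\ker Q_{12}=\JJ_{\delta_1}(E)\iff\JJ_\delta(E_2)=\JJ_\delta([E_1E])$ for any coaction. Your only additions --- checking that $[E_1E]$ is itself a large ideal via \cite[Lemma~3.14]{graded}, and flagging that injectivity is used only at the final step --- are sound and merely make explicit what the paper leaves implicit.
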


It is interesting to consider the 
special case $E=E_1=E_2$, since it makes a connection with the $C^*$-bialgebra structure.

But first, another definition:

\begin{defn}\label{E-normal}
A coaction $(A,\delta)$ is \emph{$E$-normal} if $(\id\otimes q_E)\circ\delta$ is faithful.
\end{defn}

\begin{ex}
A coaction is normal in the usual sense if and only if it is $B_r(G)$-normal in the above sense.
At the other extreme, every coaction is $B(G)$-normal, because $q_{B(G)}$ is the identity map.
Note that every normal coaction is $B_r(G)$-determined from its maximalization,
and every maximal coaction is $B(G)$-determined from itself.
However, we will show in \propref{ok} that in general a coaction that is $E$-determined from its maximalization need not be $E$-normal.
\end{ex}

Recall that the ``canonical'' comultiplication $\Delta_G^E$ on $C^*_E(G)$ is 
defined as the unique homomorphism making the diagram
\[
\xymatrix{
C^*_E(G) \ar[r]^-{\delta_G^E} \ar[dr]_(.4){\Delta_G^E}
&M(C^*_E(G)\otimes C^*(G)) \ar[d]^{\id\otimes q_E}
\\
&M(C^*_E(G)\otimes C^*_E(G))
}
\]
commute.

\begin{cor}\label{bialg}
If $E$ is a large ideal of $B(G)$, then the following are equivalent:
\begin{enumerate}
\item $(C^*_E(G),\delta_G^E)$ is $E$-determined from $(C^*_E(G),\delta_G^E)$;
\item $E^2$ has weak* dense span in $E$;
\item the canonical comultiplication $\Delta_G^E$ on $C^*_E(G)$ is faithful;
\item $(C^*_E(G),\delta_G^E)$ is $E$-normal in the sense of \defnref{E-normal}.
\end{enumerate}
\end{cor}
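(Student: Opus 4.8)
The plan is to collapse all four conditions onto the single assertion that $\JJ_{\delta_G^E}(E)=\{0\}$, equivalently that the comultiplication $\Delta_G^E$ is injective. The crucial identification is that, by the theorem of \secref{once} expressing $\JJ(E)$ as $\ker(\id\otimes q_E)\circ\delta$ — applied to the coaction $(C^*_E(G),\delta_G^E)$, so that the module action in question is the one induced by $\delta_G^E$ — we have $\JJ_{\delta_G^E}(E)=\ker(\id\otimes q_E)\circ\delta_G^E$; and the diagram defining the canonical comultiplication says precisely that $(\id\otimes q_E)\circ\delta_G^E=\Delta_G^E$. Hence $\JJ_{\delta_G^E}(E)=\ker\Delta_G^E$, and the whole corollary reduces to reading off this kernel.

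With this in hand I would first dispatch the equivalence of (iii) and (iv), which is essentially definitional: by \defnref{E-normal}, $E$-normality of $(C^*_E(G),\delta_G^E)$ means that $(\id\otimes q_E)\circ\delta_G^E$ is faithful, and since this map equals $\Delta_G^E$, condition (iv) is literally the statement that $\Delta_G^E$ is faithful, i.e. (iii).

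Next I would prove (i) $\iff$ (iii). Unwinding \defnref{E-determined} with $(A,\delta)=(B,\epsilon)=(C^*_E(G),\delta_G^E)$ and $\varphi$ the identity map, condition (i) asserts $\ker\id=\JJ_{\delta_G^E}(E)$, that is, $\JJ_{\delta_G^E}(E)=\{0\}$. By the identity $\JJ_{\delta_G^E}(E)=\ker\Delta_G^E$ recorded above, this is the same as $\ker\Delta_G^E=\{0\}$, which is (iii).

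Finally, for (i) $\iff$ (ii) I would specialize \corref{discrete} to the degenerate case $E_1=E_2=E$. In the setup of \secref{twice} with $(A,\delta)=(C^*(G),\delta_G)$ — which is slice proper and maximal, and for which every small ideal is of the form $\JJ(E')$ — taking $E_1=E_2=E$ gives $J_1=J_2=\pann E$ and makes the connecting map $Q_{12}\colon C^*_E(G)\to C^*_E(G)$ the identity. Then ``$(C^*_{E_2}(G),\delta_G^{E_2})$ is $E$-determined from $(C^*_{E_1}(G),\delta_G^{E_1})$'' becomes exactly (i), while the criterion ``$E_1E$ has weak* dense span in $E_2$'' becomes ``$E^2$ has weak* dense span in $E$'', i.e. (ii). The only delicate point — the one I would check most carefully — is the bookkeeping of this specialization: verifying that $E_1=E_2$ is admissible in the framework of \secref{twice} (so that $J_1\subset J_2$ holds, with equality, and $Q_{12}=\id$), and confirming throughout that the module action defining $\JJ$ on the quotient is the one coming from $\delta_G^E$ rather than from the original $\delta_G$. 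Once these identifications are pinned down, no genuine computation remains; the four conditions are tied together by the single identity $\JJ_{\delta_G^E}(E)=\ker\Delta_G^E$ together with \corref{discrete}.
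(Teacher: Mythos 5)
Your proposal is correct, and it takes a mildly but genuinely different route from the paper's proof in one step. The paper chains (i) $\iff$ (ii) $\iff$ (iii) $\iff$ (iv): it gets (i) $\iff$ (ii) from \corref{discrete} with $E_1=E_2=E$ exactly as you do, and (iii) $\iff$ (iv) definitionally as you do, but its link between (ii) and (iii) is a direct weak*-duality computation --- $E^2$ has weak* dense span in $E$ if and only if $\pann(E^2)=\{0\}$ in $C^*_E(G)$, and since $(fg)(c)=(f\otimes g)\circ\Delta_G^E(c)$ and the elementary tensors $f\otimes g$ separate points of $C^*_E(G)\otimes C^*_E(G)$, this says precisely that $\Delta_G^E$ is injective. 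You instead short-circuit that computation with the identity $\JJ_{\delta_G^E}(E)=\ker(\id\otimes q_E)\circ\delta_G^E=\ker\Delta_G^E$, obtained by applying the theorem of \secref{once} to the coaction $(C^*_E(G),\delta_G^E)$ and reading off the defining diagram of $\Delta_G^E$; this gives (i) $\iff$ (iii) and (iii) $\iff$ (iv) by pure kernel bookkeeping, with \corref{discrete} needed only to bring (ii) into the loop, and the paper's (ii) $\iff$ (iii) then falls out as a consequence rather than requiring a separate pairing argument. What your route buys is that (i), (iii), (iv) become equivalent without any duality computation, leaning instead on an already-proved slice-map result; what the paper's route buys is that (ii) $\iff$ (iii) stands on its own as a self-contained fact about $E$ as the dual of $C^*_E(G)$. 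Your bookkeeping for the specialization $E_1=E_2=E$ is also sound and matches the paper's own use of \corref{discrete}: $(C^*(G),\delta_G)$ is proper, hence slice proper, it is maximal, its small ideals are exactly the preannihilators of large ideals with $\JJ_{\delta_G}(E)=\pann E$ and $\EE(\pann E)=E$, so indeed $J_1=J_2=\pann E$, $Q_{12}=\id$, and $E$-determination of $Q_{12}$ says precisely $\JJ_{\delta_G^E}(E)=\{0\}$, where the module action is the one induced by $\delta_G^E$ as you note.
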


\begin{proof}
(i) $\iff$ (ii) by \corref{discrete} with $E=E_1=E_2$.
Since $E$ is the dual of $C^*_E(G)$,
$E^2$ has weak* dense span in $E$ if and only if the preannihilator $\pann(E^2)$ in $C^*_E(G)$ is $\{0\}$,
equivalently there is no nonzero $c\in C^*_E(G)$ with $(fg)(c)=0$ for all $f,g\in E$.
Since
\[
(fg)(c)=(f\otimes g)\circ\Delta_G^E(c)\quad\text{for $f,g\in E$},
\]
and the elementary tensors $f\otimes g$ separate points in $C^*_E(G)\otimes C^*_E(G)$,
we conclude (ii) $\iff$ (iii).
Finally, (iii) $\iff$ (iv) follows immediately from the definition of $E$-normality.
\end{proof}

\section{$L^p$}\label{lp}

In this section we illustrate the preceding discussion in the case of ideals of $B(G)$ determined by the $L^p$ spaces.
Note that for $1\le p<\infty$ the intersection
$L^p(G)\cap B(G)$ is a $G$-invariant ideal of $B(G)$.

\begin{defn}\label{Ep ideal}
For $1\le p<\infty$ we let $E_p=E_p(G)$ denote the weak* closure of $L^p(G)\cap B(G)$.
\end{defn}

Since $L^p(G)\cap B(G)$ is a $G$-invariant ideal of $B(G)$, so is $E_p$.
Since $C_c(G)\subset L^p(G)$, $E_p$ contains 
$C_c(G)\cap B(G)$, so $E_p$ is a large ideal of $B(G)$, i.e., contains $B_r(G)$.

Consider the maximal coaction $(C^*(G),\delta_G)$.
Our general theory shows that every large quotient coaction of $(C^*(G),\delta_G)$ is $E$-determined for some large ideal $E$ of $B(G)$.
We do not know the answer to \qref{12} even in this setting, 
but we can at least 
give some information
when we restrict the ideals of $B(G)$ to be of the form $E_p$:

\begin{prop}\label{pq}
Let $\infty>p>q\ge 1$,
so that $E_p\supset E_q$,
where the ideals $E_p$ are defined in \defnref{Ep ideal}.
Then the weak* closed span of $E_pE_r$ is contained in $E_q$, where
\[
\frac 1r+\frac 1p=\frac 1q.
\]
\end{prop}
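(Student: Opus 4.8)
The plan is to prove the stronger pointwise statement that $fg\in E_q$ for every $f\in E_p$ and $g\in E_r$; since $E_q$ is a weak* closed linear subspace, this at once gives that the weak* closed span of $E_pE_r$ is contained in $E_q$. I would first verify the conclusion on the ``dense'' generators $L^p(G)\cap B(G)$ and $L^r(G)\cap B(G)$, and then propagate it to the weak* closures $E_p$ and $E_r$ by a separate-continuity argument.

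First I would treat $f\in L^p(G)\cap B(G)$ and $g\in L^r(G)\cap B(G)$. The pointwise product $fg$ lies in $B(G)$ because $B(G)$ is an algebra under pointwise multiplication, and it lies in $L^q(G)$ by H\"older's inequality: since $\tfrac1p+\tfrac1r=\tfrac1q$, the exponents $p/q$ and $r/q$ are conjugate, so
\[
\int_G|fg|^q=\int_G|f|^q\,|g|^q\le\big\||f|^q\big\|_{p/q}\,\big\||g|^q\big\|_{r/q}=\|f\|_p^q\,\|g\|_r^q<\infty.
\]
Hence $fg\in L^q(G)\cap B(G)\subset E_q$ by \defnref{Ep ideal}.

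The remaining and essential step is to replace $L^p(G)\cap B(G)$ and $L^r(G)\cap B(G)$ by $E_p$ and $E_r$, and for this the key fact is that multiplication in $B(G)=C^*(G)^*$ is \emph{separately} weak* continuous. Indeed, for $g\in B(G)$ and $c\in C^*(G)$ we have $(fg)(c)=(f\otimes g)\circ\delta_G(c)=f\big((\id\otimes g)\circ\delta_G(c)\big)=f(g\cdot c)$, and since the canonical coaction $\delta_G$ on $C^*(G)$ is proper (it is symmetric, so $(\id\otimes g)\circ\delta_G(c)=(g\otimes\id)\circ\delta_G(c)\in C^*(G)$ by slice properness; see the remark following \defnref{proper def} and \lemref{weak*}), the element $g\cdot c$ lies in the predual $C^*(G)$. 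Thus for each fixed $g$ the scalar $f\mapsto(fg)(c)$ is weak* continuous for every $c$, so $f\mapsto fg$ is weak*-to-weak* continuous, and symmetrically $g\mapsto fg$ is weak* continuous for each fixed $f$.

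With separate weak* continuity in hand I would run the usual two-step closure argument. Fix $g\in L^r(G)\cap B(G)$: the set $\{f\in B(G):fg\in E_q\}$ is the preimage of the weak* closed set $E_q$ under the weak*-continuous map $f\mapsto fg$, hence is weak* closed, and it contains $L^p(G)\cap B(G)$ by the H\"older computation, so it contains its weak* closure $E_p$. Thus $fg\in E_q$ for all $f\in E_p$ and all $g\in L^r(G)\cap B(G)$. Now fix $f\in E_p$: the set $\{g\in B(G):fg\in E_q\}$ is weak* closed and contains $L^r(G)\cap B(G)$ by what was just shown, hence contains $E_r$. Therefore $fg\in E_q$ for all $f\in E_p$ and $g\in E_r$, which completes the argument. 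The one genuine subtlety is that $B(G)$-multiplication is only separately, not jointly, weak* continuous, which is precisely why the closures must be taken one factor at a time and why the properness of $\delta_G$ is invoked to place $g\cdot c$ in the predual $C^*(G)$.
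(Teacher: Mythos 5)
Your proof is correct and follows the same route as the paper, which likewise deduces the result from the H\"older inclusion $L^p(G)L^r(G)\subset L^q(G)$ together with separate weak* continuity of multiplication in $B(G)$. The only difference is one of detail: the paper takes the separate continuity as known, whereas you justify it via slice properness of $\delta_G$ and carry out the two-step one-variable-at-a-time closure argument explicitly, both of which are accurate.
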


\begin{proof}
Since multiplication in $B(G)$ is separately weak* continuous,
it suffices to observe that, by a routine application of H\"older's inequality,
\[
L^p(G)L^r(G)\subset L^q(G).
\qedhere
\]
\end{proof}

\begin{rem}
We cannot conclude from the above proof that $E_pE_r$ has weak* dense span in $E_q$, because we can't take roots in $B(G)$; more precisely, we do not see how to prove that the span of $E_pE_r$ in $B(G)$ is weak* dense in $E_q$.
\end{rem}

It has been attributed to independent work of Higson, Ozawa, and Okayasu (see \cite[Remark~4.5]{BO}, \cite[Corollary~3.7]{okayasu}) that (in our notation) for $2\le d<\infty$ and $\infty>p>q\ge 2$, the canonical quotient map of $C^*_{E_p}(\F_d)$ onto $C^*_{E_q}(\F_d)$ is not faithful, equivalently $E_p\ne E_q$.
On the other hand, \cite[Proposition~2.11]{BO} (see also \cite[Proposition~4.2]{graded}) implies that $E_p=E_q$ for all $2\ge p>q\ge 1$.
This leads to:

\begin{prop}\label{ok}
For $2\le d<\infty$ and $\infty>p>2$, the canonical comultiplication $\Delta_{\F_d}^{E_p}$ on $C^*_{E_p}(\F_d)$ is not faithful, and the coaction $(C^*_{E_p}(\F_d),\delta_{\F_d}^{E_p})$, although it is $E_p$-determined from its maximalization, is not $E_p$-normal in the sense of \defnref{E-normal}.
\end{prop}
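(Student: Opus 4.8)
The plan is to route everything through \corref{bialg}, which for the large ideal $E=E_p$ makes the failure of faithfulness of $\Delta_{\F_d}^{E_p}$ (condition (iii)) and the failure of $E_p$-normality (condition (iv)) both equivalent to the failure of condition (ii), namely that $E_p^2$ have weak* dense span in $E_p$. So the entire analytic content reduces to the single assertion that the weak* closed span of $E_p^2$ is a \emph{proper} subspace of $E_p$.

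First I would dispose of the ``$E_p$-determined from its maximalization'' clause, which is essentially built into the construction. Since $(C^*(\F_d),\delta_{\F_d})$ is maximal and $(C^*_{E_p}(\F_d),\delta_{\F_d}^{E_p})$ is a large quotient of it (its kernel $\pann E_p=\JJ_{\delta_{\F_d}}(E_p)$ being small, as $E_p$ is large), the maximalization of $(C^*_{E_p}(\F_d),\delta_{\F_d}^{E_p})$ is $(C^*(\F_d),\delta_{\F_d})$ itself. The associated equivariant surjection $\psi\colon C^*(\F_d)\to C^*_{E_p}(\F_d)$ has $\ker\psi=\pann E_p=\JJ_{\delta_{\F_d}}(E_p)$, so by \defnref{E-determined} the coaction is $E_p$-determined from its maximalization.

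For the main point I would locate the weak* closed span of $E_p^2$. Applying \propref{pq} with both factors equal to $E_p$ (so that $q=p/2$; equivalently, running the H\"older inclusion $L^p(G)L^p(G)\subset L^{p/2}(G)$ together with separate weak* continuity of multiplication in $B(G)$), the weak* closed span of $E_pE_p$ is contained in $E_{p/2}$. Note $p/2>1$ since $p>2$, so $E_{p/2}$ is defined and large. It then suffices to show the strict containment $E_{p/2}\subsetneq E_p$, and here I would split into cases. If $p\ge 4$ then $\infty>p>p/2\ge 2$, so the Higson--Ozawa--Okayasu separation cited before \propref{ok} gives $E_{p/2}\ne E_p$. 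If $2<p<4$ then $1<p/2<2$, and the collapse $E_s=E_2$ for $1\le s\le 2$ gives $E_{p/2}=E_2$, while Higson--Ozawa--Okayasu again yields $E_2\ne E_p$ because $p>2$. In every case $E_{p/2}\subsetneq E_p$, so the weak* closed span of $E_p^2$ lies in $E_{p/2}\subsetneq E_p$ and hence is not weak* dense in $E_p$; this is exactly the failure of \corref{bialg}(ii).

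Invoking \corref{bialg}, the failure of (ii) forces the failure of (iii) and (iv), i.e.\ $\Delta_{\F_d}^{E_p}$ is not faithful and $(C^*_{E_p}(\F_d),\delta_{\F_d}^{E_p})$ is not $E_p$-normal, completing the proof. The only genuine obstacle is the strict inclusion $E_{p/2}\subsetneq E_p$: the present machinery does not produce it, and I would simply import it from the cited distinctness results for the $E_p$, being careful in the range $2<p<4$ to pass through the identification $E_{p/2}=E_2$ rather than expecting the separation theorem to apply below the threshold $2$.
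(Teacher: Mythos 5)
Your proposal is correct and follows essentially the same route as the paper: apply \propref{pq} with both factors equal to $E_p$ to trap the weak* closed span of $E_p^2$ inside $E_{p/2}$, quote the Higson--Ozawa--Okayasu distinctness (together with the collapse $E_s=E_2$ for $1\le s\le 2$) to get $E_{p/2}\subsetneq E_p$, and conclude via \corref{bialg}. Your only additions---the explicit case split at $p=4$ and the verification that $(C^*_{E_p}(\F_d),\delta_{\F_d}^{E_p})$ is $E_p$-determined from its maximalization $(C^*(\F_d),\delta_{\F_d})$---are details the paper leaves implicit in its appeal to ``the discussion preceding'' and to the general theory, and both are handled correctly.
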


\begin{proof}
It follows from 
\propref{pq} that the weak* closed span of $E_p^2$ is 
contained in $E_{p/2}$, and hence is different from $E_p$
by the discussion preceding \corref{ok}. Thus the result follows from \corref{bialg}.
\end{proof}

\begin{q}
The above discussion of the conditions listed in \corref{bialg} as they relate to the ideals $E_p(G)$ should be carried out for some other well-known large ideals of $B(G)$, namely the weak* closure $E_0(G)$ of $C_0(G)\cap B(G)$ and the ideal $E$ orthogonal to the almost periodic functions $AP(G)$ (see \cite[Remark~4.3 (3)]{graded}).
For example, it would be interesting to know whether, in each of these cases, the square of the ideal is weak* dense in the ideal itself.
\end{q}


\providecommand{\bysame}{\leavevmode\hbox to3em{\hrulefill}\thinspace}
\providecommand{\MR}{\relax\ifhmode\unskip\space\fi MR }
\providecommand{\MRhref}[2]{%
  \href{http://www.ams.org/mathscinet-getitem?mr=#1}{#2}
}
\providecommand{\href}[2]{#2}

\end{document}